\documentclass[reqno,11pt,a4paper]{amsart}
\usepackage[T1]{fontenc}
\usepackage{graphicx,mathtools,amssymb,amsthm,thmtools}
\usepackage{xcolor,babel,amsmath,enumitem,bm}
\usepackage{appendix}
\usepackage{tikz-cd,caption}
\usetikzlibrary{patterns, arrows.meta, positioning}
\usepackage[numbers]{natbib}
\newtheorem{theorem}{Theorem}[section]
\newtheorem{lemma}{Lemma}[section]
\newtheorem{definition}{Definition}[section]

\newtheorem{remark}{Remark}[section]
\numberwithin{equation}{section}
\usepackage[%
colorlinks=true,  
linkcolor=blue,  
citecolor=blue,  
urlcolor=black,   
anchorcolor=black,
]{hyperref}
\makeatletter
\@ifundefined{theHtheorem}{}{%
}
\@ifundefined{theHlemma}{}{%
}
\@ifundefined{theHremark}{}{%
}
\@ifundefined{theHproposition}{}{%
}

\makeatother
\usepackage{mathrsfs}
\allowdisplaybreaks[4]
\begin{document}
	\title[ Sharp well-posedness and ill-posedness of the Camassa-Holm equation]
	{Sharp well-posedness and ill-posedness of the Camassa-Holm equation in critical Triebel-Lizorkin spaces}
	\author{Qianyuan Zhang}
	\address{Qianyuan Zhang\newline
		School of Mathematics and Statistics\\
		Huazhong University of Science and Technology, Wuhan 430074,  China}
	\email{qianyuanzhang@hust.edu.cn}
	
	\author{Kai Yan\textsuperscript{*}}
	\thanks{\noindent $^*$Corresponding author.}
	\address{Kai Yan (Corresponding author)\newline
		School of Mathematics and Statistics\\
		Huazhong University of Science and Technology, Wuhan 430074,  China}
	\email{kaiyan@hust.edu.cn}
	
	\begin{abstract}
		This paper is devoted to the sharp well-posedness and ill-posedness of the Cauchy problem for the Camassa-Holm (CH) equation in critical Triebel-Lizorkin spaces $F^{1+\frac{1}{p}}_{p,q}(\mathbb{R})$ with $(p,q)\in[1,\infty)\times[1,\infty]$ or $p=q=\infty$. On the one hand, we establish the local well-posedness in the sense of Hadamard in $F^2_{1,q}(\mathbb{R})$ for $1\leq q<\infty$ via Lagrangian coordinate transformation. On the other hand, by means of smooth atomic decomposition, strong ill-posedness is then proved in $F^{1+\frac{1}{p}}_{p,q}(\mathbb{R})$ with $(p,q)\in(1,\infty)\times[1,\infty]$ or $p=q=\infty$ in the sense of norm inflation, which in particular yields the ill-posedness of CH in critical Sobolev spaces $W^{1+\frac{1}{p},p}(\mathbb{R})$ with $1<p<\infty$,  and provides a new perspective on the ill-posedness of CH in $H^{\frac{3}{2}}(\mathbb{R})$.
	\end{abstract}

	\maketitle
	
	\noindent {\sl Keywords\/}: Camassa-Holm equation, critical Triebel-Lizorkin spaces, well/ill-posedness, norm inflation, atomic decomposition
	
	\vskip 0.2cm
	
	\noindent {\sl AMS Subject Classification (2020)}: 35G25, 35Q53, 37K10 \\
	
	\setcounter{equation}{0}

	\section{Introduction}
	Over the past decades, a large amount of literature was devoted to the following celebrated Camassa-Holm (CH) equation: 
	\begin{equation}\label{CH}
		u_t-u_{txx}+3uu_x=2u_xu_{xx}+uu_{xxx},
	\end{equation}
	which is proposed as a bi-Hamiltonian equation \cite{MR636470} and models the unidirectional propagation of shallow water waves over a flat bottom \cite{MR1234453} (see also a rigorous justification in shallow water \cite{MR2481064}). Here $u(t,x)$ stands for the fluid velocity at time $t$ and in the spatial $x$ direction. The CH equation \eqref{CH} is also recognized as a model for the propagation of axially symmetric waves in hyperelastic rods \cite{MR1606738}. It has a bi-Hamiltonian structure and is completely integrable \cite{MR1234453}. Its solitary waves are peaked solitons (peakons) \cite{MR1234453,MR2318158} (see also \cite{MR2257390,MR2753609} for the waves of great height in irrotational water waves), and they are orbitally stable \cite{MR1737505}. Moreover, for the Cauchy problem of CH equation \eqref{CH}, it has both global strong solutions \cite{MR1775353,MR1631589} and blow-up solutions within finite time \cite{MR1775353,MR1631589,MR1668586} which is featured as wave breaking \cite{MR1668586} (namely, the wave remains bounded while its slope becomes unbounded in finite time \cite{MR483954}). In comparison with the KdV equation, the advantage of the CH equation \eqref{CH} lies in the fact that the CH equation has peakons and models wave breaking.
	
	Now, let us rewrite the Cauchy problem for \eqref{CH} as its nonlocal form:
	\begin{equation}\label{CH-transport-equation}
		\begin{cases}
			\partial_tu+u\partial_xu=P(D)\big(u^2+\frac{1}{2}(\partial_xu)^2\big),&t>0, \quad x\in\mathbb{R},\\
			u(0,x)=u_0(x),& x\in\mathbb{R}
		\end{cases}
	\end{equation}
	with $P(D)=-\partial_x(1-\partial^2_x)^{-1}$. The well-posed issue of the Cauchy problem \eqref{CH-transport-equation} has been extensively studied. Li and Olver \cite{MR1741872} (see also \cite{MR1851854}) proved that the Cauchy problem \eqref{CH-transport-equation} is locally well-posed with the initial data $u_0\in H^s(\mathbb{R})$ with $s>\frac{3}{2}$. The index $s=\frac{3}{2}$ is critical, since \eqref{CH-transport-equation} is ill-posed  in $H^s(\mathbb{R})$ for $s<\frac{3}{2}$ (see \cite{MR2244592}). Moreover, The ill-posedness in the critical space $H^{\frac{3}{2}}(\mathbb{R})$ was established in \cite{MR3906228}  by proving norm inflation in the general Besov spaces $B^{1+1/p}_{p,q}(\mathbb{R})$ for $1\leq p\leq \infty$ and $1<q\leq\infty$. With regard to the case in the general critical Sobolev spaces $W^{1+1/p,p}(\mathbb{R})$ for $1\leq p\leq \infty$, it was shown in \cite{MR4635983} that \eqref{CH-transport-equation} is well-posed in $W^{2,1}(\mathbb{R})$. Ill-posedness in $W^{1,\infty}(\mathbb{R})$ was proved in \cite{MR4388135} by showing the failure of continuous dependence. Guo and Ye \cite{MR4635983} claimed that the CH equation is ill-posed in $W^{1+1/p, p}(\mathbb{R})$ with $1<p<\infty$. On the other hand, Danchin \cite{MR1827098,MR1990847} investigated the case in Besov spaces, establishing local well-posedness in $B^s_{p,q}(\mathbb{R})$ for $s>\max(1+\frac{1}{p},\frac{3}{2})$ with $1\leq p,q\leq \infty,$ as well as for the endpoint case $s=1+\frac{1}{p}$ with $1\leq p \leq 2$ and $q=1$. The well-posedness in the critical Besov space $B^{1+1/p}_{p,1}(\mathbb{R})$ for $1\leq p<\infty$ was later proved by Ye, Yin and Guo \cite{MR4595933}, whereas the ill-posedness in the limiting space $B^1_{\infty,1}(\mathbb{R})$ was established in \cite{MR4413296}.
	
	As is well known, the Triebel-Lizorkin spaces $F^s_{p,q}$ provide a unified framework covering many classical function spaces arising in the theory of partial differential equations. Notable identifications include (see the details in \cite{MR2768550,MR781540}):
	\begin{itemize}[left=0pt]
		\item The Sobolev spaces $W^{s,p}=F^s_{p,2}$ for $s\in\mathbb{R},1<p<\infty.$ In particular, $H^s=F^s_{2,2}$.
		\item The H\"{o}lder-Zygmund spaces $\mathscr{C}^s=F^s_{\infty,\infty}$ for $s>0. $
		\item The local Hardy spaces $h_p=F^0_{p,2}$ for $0<p<\infty$. 
		\item The local BMO spaces $bmo=F^0_{\infty,2}$. 
		\item The Besov spaces: $B^s_{p,\min(p,q)} \hookrightarrow F^s_{p,q}\hookrightarrow B^s_{p,\max(p,q)}$ $(s\in\mathbb{R},(p,q)\in[1,\infty)\times [1,\infty]$ or $p=q=\infty$). In particular, $F^s_{p,p}=B^s_{p,p}$ $(s\in\mathbb{R},1\leq p\leq \infty)$.
	\end{itemize}
	Recently, Zhang and Yan \cite{zhang2026commutatorestimatesapplicationstransporttype} proved the local well-posedness for \eqref{CH-transport-equation} in subcritical Triebel-Lizorkin spaces $F^s_{p,q}(\mathbb{R})$ for $s>\max(1+\frac{1}{p},\frac{3}{2})$ with $(p,q)\in[1,\infty)\times[1,\infty]$ or $p=q=\infty$. To the best of our knowledge, the critical case in the Triebel-Lizorkin scale remains completely open. The goal of this paper is therefore to study the well-posedness of the CH equation in the critical Triebel-Lizorkin spaces $F^{1+1/p}_{p,q}(\mathbb{R})$ with $(p,q)\in[1,\infty)\times[1,\infty]$ or $p=q=\infty$. More precisely, we prove that \eqref{CH-transport-equation} is local well-posed in the sense of Hadamard in $F^2_{1,q}(\mathbb{R})$ for $1\leq q<\infty$, and on the other hand, it is ill-posed in $F^{1+1/p}_{p,q}(\mathbb{R})$ with $(p,q)\in(1,\infty)\times[1,\infty]$ or $p=q=\infty$ in the sense of norm inflation.
	
	 When discussing the well-posedness of \eqref{CH-transport-equation} in the critical space $F^{2}_{1,q}(\mathbb{R})$, the main difficulty is to prove the uniqueness, since the following Moser-type inequality fails to hold:
	\[ \|fg\|_{F^0_{1,q}(\mathbb{R})}\leq C\|f\|_{F^0_{1,q}(\mathbb{R})}\|g\|_{F^1_{1,q}(\mathbb{R})}. \]
	To circumvent the use of such Moser-type estimates, and motivated by the approach in \cite{MR4595933}, we employ Lagrangian coordinate transformations to investigate the uniqueness of solutions to the CH equation. Let us first consider the following abstract equation:
	\begin{equation}\label{general-abstract-equation}
		\begin{cases}
			\partial_tu+A(u)\partial_xu=G(u),& t>0,\quad x\in\mathbb{R},\\
			u(0,x)=u_0(x),&x\in\mathbb{R},
		\end{cases}
	\end{equation}
	where $A(u)$ is a polynomial of $u$ and $G$ is called a \textquoteleft good operator' such that for any $\phi\in C^\infty_c(\mathbb{R})$ and $\varepsilon>0$ small enough, the following fact holds
	\[ \text{if}\qquad u_n\phi\to u\phi\qquad\text{in}\qquad F^{2-\varepsilon}_{1,q}(\mathbb{R}),\qquad\text{then}\qquad \langle G(u_n),\phi \rangle\to\langle G(u),\phi\rangle. \] 
	Let $X(t,\alpha)$ be the flow of $A(u)$, i.e., the solution to the following ordinary differential equation:
	\begin{equation}\label{ODE}
		\begin{cases}
			\partial_tX(t,\alpha)=A\big(u(t,X(t,\alpha))\big),&\quad t>0,\alpha\in\mathbb{R},\\
			X(0,\alpha)=\alpha,&\quad\alpha\in\mathbb{R}.
		\end{cases}
	\end{equation}
	Set $U(t,\alpha)=u(t,X(t,\alpha))$, then we have 
	\begin{equation}\label{Lagrangian}
		\begin{cases}
			U_t=G(u(t,X(t,\alpha)))\triangleq\widetilde{G}(U,X),&\quad t>0,\alpha\in\mathbb{R},\\
			U(0,\alpha)=U_0(\alpha)=u_0(\alpha),&\quad\alpha\in\mathbb{R}.
		\end{cases}
	\end{equation}
	Now, we state our first main theorem of this paper as follows.
	\begin{theorem}\label{thm:abstract-Local-well-posedness}
		Let $1\leq q< \infty$ and $k\in\mathbb{N}^+$. Suppose that $u_0\in F^{2}_{1,q}(\mathbb{R})$ and $G$ is a \textquoteleft good operator\textquoteright . If the following conditions for $G,\widetilde{G}$ hold
		\begin{align}
			\|G(u)\|_{F^2_{1,q}}
			&\leq C \big(\|u\|^{k+1}_{F^2_{1,q}}+1\big),\label{eq:existence}\\
			\|\widetilde{G}(U,X)-\widetilde{G}(\bar{U},\bar{X})\|_{W^{1,\infty}\cap W^{1,1}}
			&\leq C \big(\|U-\bar{U}\|_{W^{1,\infty}\cap W^{1,1}}+\|X-\bar{X}\|_{W^{1,\infty}\cap W^{1,1}}\big),\label{eq:uniqueness}\\
			\|G(u)-G(\bar{u})\|_{F^2_{1,q}}
			&\leq C\|u-\bar{u}\|_{F^2_{1,q}}.\label{eq:continuous}
		\end{align} 
		Then, there exists a time $T>0$ such that
		\begin{enumerate}[label=(\arabic*),leftmargin=*]
			\item Existence: If \eqref{eq:existence} holds, then \eqref{general-abstract-equation} has a solution $u\in E_q(T)\triangleq C([0,T];F^2_{1,q}(\mathbb{R}))\cap C^1([0,T];F^1_{1,q}(\mathbb{R}))$;
			\item  Uniqueness: If \eqref{eq:existence} and \eqref{eq:uniqueness} hold, then the solution of \eqref{general-abstract-equation} is unique;
			\item Continuous dependence: If \eqref{eq:existence}-\eqref{eq:continuous} hold, then the solution map is continuous from any bounded subset of $F^2_{1,q}(\mathbb{R})$ to $C([0,T];F^2_{1,q}(\mathbb{R}))$.
		\end{enumerate}
	\end{theorem}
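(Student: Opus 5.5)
We prove the three assertions in turn, following the strategy of \cite{MR4595933} adapted to the Triebel--Lizorkin scale.

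\textbf{Existence.} We use the standard approximation scheme. Let $S_n$ be the low-frequency cut-off and set $u_0^n=S_nu_0$. We build the iterates $u^{n+1}$ by solving the linear transport equations
\[ \partial_tu^{n+1}+A(u^n)\partial_xu^{n+1}=G(u^n),\qquad u^{n+1}(0)=S_{n+1}u_0. \]
For $F^2_{1,q}$ we would use the a priori transport estimate from \cite{zhang2026commutatorestimatesapplicationstransporttype}, in the critical form valid for $s=1+\frac1p$ with $p=1$:
\[ \|f(t)\|_{F^2_{1,q}}\leq \|f_0\|_{F^2_{1,q}}+\int_0^t\|g\|_{F^2_{1,q}}+C\int_0^t\|\partial_x v\|_{F^{1}_{1,q}}\|f\|_{F^2_{1,q}}. \]
This form uses $F^1_{1,q}\hookrightarrow L^\infty$. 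Combined with \eqref{eq:existence} and the fact that $A$ is a polynomial, $F^2_{1,q}$ being an algebra, we obtain
\[ \|u^{n+1}(t)\|\leq \|u_0\|+C\int_0^t\bigl(\|u^n\|^{K}+1\bigr)\|u^{n+1}\|+C\int_0^t\bigl(\|u^n\|^{k+1}+1\bigr). \]
Choosing $T$ small gives a uniform bound in $L^\infty_TF^2_{1,q}$. The equation then gives a bound on $\partial_tu^n$ in $F^1_{1,q}$. Compactness (Aubin--Lions on bounded domains with cut-off $\phi$, $F^2_{1,q}\hookrightarrow F^{2-\varepsilon}_{1,q}$ locally compactly) yields $u^n\phi\to u\phi$ in $C_TF^{2-\varepsilon}_{1,q}$. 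The ``good operator'' hypothesis then lets us pass to the limit in $G(u^n)$, while the polynomial nonlinearity $A(u^n)\partial_xu^n$ passes to the limit directly. Finally, $u\in L^\infty_TF^2_{1,q}$ by Fatou. Time continuity with values in $F^2_{1,q}$ (using $q<\infty$) is obtained by the usual argument: frequency-localized equations and a uniform smallness of high frequencies, or continuity via the Bona--Smith style estimate below.

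\textbf{Uniqueness.} This is the main obstacle, because an $F^1_{1,q}$ difference estimate is unavailable. We pass to Lagrangian coordinates. For a solution $u\in E_q(T)$ we have $u_x\in F^1_{1,q}\subset W^{1,1}\subset L^\infty$, so $A(u)$ is Lipschitz and the flow $X$ of \eqref{ODE} is a bi-Lipschitz $C^1$ diffeomorphism. Moreover $X_\alpha=\exp\bigl(\int_0^tA'(u)u_x\circ X\bigr)$ and $X-\alpha\in W^{1,\infty}$, with $X_\alpha-1\in L^1$. Let $(U,X)$ and $(\bar U,\bar X)$ be two such solutions with the same data. They satisfy \eqref{Lagrangian} together with $X_t=A(U)$. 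We estimate the differences in $W^{1,\infty}\cap W^{1,1}$ using \eqref{eq:uniqueness} and the Lipschitz property of the polynomial $A$ on bounded sets of $W^{1,\infty}$:
\[ \|U-\bar U\|(t)+\|X-\bar X\|(t)\leq C\int_0^t\bigl(\|U-\bar U\|+\|X-\bar X\|\bigr). \]
Here we need that $U_\alpha=u_x\circ X\,X_\alpha$ stays in $L^1\cap L^\infty$, which holds by the change of variables. Gronwall gives $U=\bar U$ and $X=\bar X$, hence $u=\bar u$, since $X$ is invertible.

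\textbf{Continuous dependence.} Take $u_0^m\to u_0$ in $F^2_{1,q}$ and use the Bona--Smith approach. Let $u_j^m$ be the solutions with data $S_ju_0^m$. We split
\[ u^m-u=(u^m-u^m_j)+(u^m_j-u_j)+(u_j-u). \]
The middle term is handled by smooth-data stability: for smoothed data, the $F^2_{1,q}$ difference estimate closes using \eqref{eq:continuous} together with higher-regularity control at the cost of $2^{j}$ factors. The outer terms are made uniformly small in $m$ by estimating the difference in $F^1_{1,q}$. That $F^1_{1,q}$ estimate would use the Lagrangian bound above to get $L^1\cap W^{1,\infty}$-type control, interpolated against $F^3_{1,q}$ growth of size $2^j$. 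Together with the tail $\|(\mathrm{Id}-S_j)u^m_0\|_{F^2_{1,q}}\to0$ uniformly on compact sets, where $q<\infty$ is essential, this yields the result. Choosing $j$ large first and then $m$ large concludes the proof.
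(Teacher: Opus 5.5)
Your existence and uniqueness arguments are essentially the paper's: iteration, compactness, Fatou and the good-operator property for existence, and Lagrangian coordinates with Gronwall based on \eqref{eq:uniqueness} for uniqueness. The continuous-dependence part, however, does not close as sketched.

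In the Bona--Smith splitting you must bound the outer term $u^m-u^m_j$ in $F^2_{1,q}$. Its equation contains the term $(A(u^m)-A(u^m_j))\,\partial_xu^m_j$. With tame product estimates its $F^2_{1,q}$ norm is controlled only by roughly $\|u^m-u^m_j\|_{L^\infty}\,\|\partial_xu^m_j\|_{F^2_{1,q}}\sim 2^j\|u^m-u^m_j\|_{L^\infty}$, plus harmless terms. So you need a lower-order bound $\|u^m-u^m_j\|=o(2^{-j})$, uniformly in $m$, in $L^\infty$ or $F^1_{1,q}$.

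In this critical setting the only lower-order difference estimate available is the Lagrangian one coming from \eqref{eq:uniqueness}. It measures the initial difference in $W^{1,\infty}\cap W^{1,1}$. Since $W^{1,\infty}$ has the same scaling as $F^2_{1,q}$, $\|(\mathrm{Id}-S_j)u^m_0\|_{W^{1,\infty}}$ is merely $o(1)$, with no $2^{-j}$ gain. Interpolating against an $F^3_{1,q}$ bound of size $2^j$ only makes the estimate worse.

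There is a second problem. Your middle term, and the bound $\|u^m_j\|_{F^3_{1,q}}\lesssim 2^j$ on a common time interval, both require propagation of $F^3_{1,q}$ regularity. That needs $F^3$-level estimates on $G$, which are not among the hypotheses \eqref{eq:existence}--\eqref{eq:continuous} of the abstract theorem.

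The paper avoids both difficulties as follows.
\begin{itemize}
\item From the Lagrangian argument it extracts $\|u^n-u^\infty\|_{L^1}\le C\|u^n_0-u^\infty_0\|_{F^2_{1,q}}$. The derivative loss is harmless because the data converge in $F^2_{1,q}$ anyway.
\item It embeds $L^1\hookrightarrow F^{-1/2}_{1,q}$ and interpolates with the uniform $F^2_{1,q}$ bound to get $u^n\to u^\infty$ in $C([0,T];F^1_{1,q})$.
\item The top derivative $v^n=\partial_xu^n$ solves $\partial_tv^n+u^n\partial_xv^n=\partial_xG(u^n)-(v^n)^2$. It is split as $v^n=w^n+z^n$.
\item $w^n$ carries the fixed source $\partial_xG(u^\infty)-(u^\infty_x)^2$ and converges by Lemma \ref{lem:continuity-transport}, which is proved by frequency truncation and uses $q<\infty$.
\item $z^n$ is handled by Gronwall, using \eqref{eq:continuous} and the fact that $F^1_{1,q}$ is an algebra.
\end{itemize}
Thus neither a Moser-type estimate in $F^0_{1,q}$ nor any extra regularity of $G$ is needed.

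A minor point: $F^1_{1,q}\subset W^{1,1}$ is false for $q>2$. What you actually use, $u_x\in L^1\cap L^\infty$, still follows from other embeddings, for instance $F^1_{1,\infty}\hookrightarrow B^0_{\infty,1}$.
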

	
	With this abstract Theorem \ref{thm:abstract-Local-well-posedness} in hand, we immediately get the following result.
	
	\begin{theorem}\label{thm:CH-wellposedness}
		Let $u_0\in F^{2}_{1,q}(\mathbb{R})$ with $1\leq q< \infty$. Then there exists a time $T>0$ such that the Camassa-Holm equation \eqref{CH-transport-equation} with initial data $u_0$ is locally well-posed in the sense of Hadamard.
	\end{theorem}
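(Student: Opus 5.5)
Theorem \ref{thm:CH-wellposedness} follows from Theorem \ref{thm:abstract-Local-well-posedness} applied with $A(u)=u$ (so $k=1$ for the polynomial degree in the flow) and $G(u)=P(D)\big(u^2+\tfrac12(\partial_xu)^2\big)$, with $P(D)=-\partial_x(1-\partial_x^2)^{-1}$. It therefore suffices to check four things: that $G$ is a good operator, and that the bounds \eqref{eq:existence}, \eqref{eq:uniqueness} and \eqref{eq:continuous} hold.

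\emph{Good operator property.} Let $u_n\phi\to u\phi$ in $F^{2-\varepsilon}_{1,q}$ for every $\phi\in C^\infty_c$. Then $u_n\to u$ and $\partial_xu_n\to\partial_xu$ locally in $L^\infty$, using the embedding $F^{2-\varepsilon}_{1,q}\hookrightarrow W^{1,1}\cap L^\infty$ together with, for the derivative, $F^{1-\varepsilon}_{1,q}\hookrightarrow L^{1}$, which gives local $L^1$ convergence. Pairing with $\phi$ moves the kernel of $P(D)$ onto the test function: $P(D)$ has kernel $-\tfrac12\partial_x e^{-|x|}$, so $P(D)^*\phi$ is smooth and exponentially decaying.
\begin{itemize}
\item The local part then converges by dominated convergence.
\item The tail needs a uniform bound on $\|u_n\|_{W^{1,2}}$. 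For the application inside the abstract theorem I will assume the sequences involved are bounded in $F^2_{1,q}$; the good-operator property is only needed along such approximating sequences.
\end{itemize}

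\emph{Existence and continuity bounds.} For \eqref{eq:existence} and \eqref{eq:continuous}, note that $P(D)$ is a multiplier of order $-1$, so $\|G(u)\|_{F^2_{1,q}}\lesssim\|u^2\|_{F^1_{1,q}}+\|(\partial_xu)^2\|_{F^1_{1,q}}$. I will use the algebra-type product estimate $\|fg\|_{F^1_{1,q}}\lesssim\|f\|_{F^1_{1,q}}\|g\|_{L^\infty}+\|f\|_{L^\infty}\|g\|_{F^1_{1,q}}$ together with $F^1_{1,q}\hookrightarrow L^\infty$ (valid since $s=1/p$ with $p=1$). Applying it with $f=g=\partial_xu\in F^1_{1,q}$ gives the quadratic bound, hence \eqref{eq:existence} with $k=1$. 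For \eqref{eq:continuous}, the bilinear structure and the factorisation $a^2-b^2=(a-b)(a+b)$ give a Lipschitz bound, with a constant depending on the bounded set. That dependence is all the abstract proof uses, since it works on bounded sets.

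\emph{Uniqueness bound (main obstacle).} The key step is \eqref{eq:uniqueness}, the Lipschitz bound in Lagrangian variables. Write $G(u)(X(\alpha))=-\tfrac12\int\operatorname{sgn}(X(\alpha)-y)e^{-|X(\alpha)-y|}\big(u^2+\tfrac12u_x^2\big)(y)\,dy$ and change variables $y=X(\beta)$. Using $u_x(X)=U_\alpha/X_\alpha$, this becomes
\[
\widetilde G(U,X)(\alpha)=-\tfrac12\int\operatorname{sgn}(\alpha-\beta)\,e^{-|X(\alpha)-X(\beta)|}\Big(U^2X_\alpha+\tfrac12\frac{U_\alpha^2}{X_\alpha}\Big)(\beta)\,d\beta,
\]
where monotonicity of $X$ lets me replace $\operatorname{sgn}(X(\alpha)-X(\beta))$ by $\operatorname{sgn}(\alpha-\beta)$. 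The $\alpha$-derivative has two pieces:
\begin{itemize}
\item a local term $-(U^2X_\alpha+\tfrac12U_\alpha^2/X_\alpha)(\alpha)$, coming from the jump of $\operatorname{sgn}$;
\item a term with $X_\alpha(\alpha)$ multiplying a smooth-kernel integral.
\end{itemize}
Differences are estimated in $W^{1,\infty}\cap W^{1,1}$ using the following:
\begin{itemize}
\item on $[0,T]$ the quantity $X_\alpha=\exp\int_0^tu_x$ is bounded above and below, since $u_x\in L^\infty$;
\item $|e^{-a}-e^{-b}|\le|a-b|$;
\item Young's inequality for the exponential kernel, with the weight bound $e^{-|X(\alpha)-X(\beta)|}\le e^{-c|\alpha-\beta|}$, which holds because $X_\alpha\ge c$.
\end{itemize}
Every term is at most quadratic in $(U,U_\alpha,X_\alpha)$ and these are bounded. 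Hence the difference is Lipschitz in $\|U-\bar U\|_{W^{1,\infty}\cap W^{1,1}}+\|X-\bar X\|_{W^{1,\infty}\cap W^{1,1}}$, where $X-\bar X$ is to be read as $X-\mathrm{id}$ type quantities and $X_\alpha-\bar X_\alpha$. The delicate point is keeping track of the bounds on the bounded set and the lower bound on $X_\alpha$. With these checks done, parts (1)–(3) of Theorem \ref{thm:abstract-Local-well-posedness} yield Hadamard well-posedness.
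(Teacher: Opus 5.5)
Your proposal is correct and follows the same route as the paper. Both apply Theorem~\ref{thm:abstract-Local-well-posedness} with $A(u)=u$ and $G(u)=P(D)\big(u^2+\tfrac12 u_x^2\big)$, check \eqref{eq:existence} and \eqref{eq:continuous} through the algebra property of $F^1_{1,q}$, and take \eqref{eq:uniqueness} from the Lagrangian representation of $\widetilde G$. The paper simply cites \eqref{eq:uniqueness} from Ye, Yin and Guo and dismisses the good-operator check as an ``approximation argument''; your restriction to sequences bounded in $F^2_{1,q}$ is exactly what makes that check work.

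Only cosmetic slips remain:
\begin{itemize}
\item The prefactor in your formula for $\widetilde G$ should be $+\tfrac12$, not $-\tfrac12$.
\item $\partial_x u_n$ converges in $L^{1/\varepsilon}_{\mathrm{loc}}$ (or in $L^1_{\mathrm{loc}}$ together with a uniform $L^\infty$ bound), not in $L^\infty_{\mathrm{loc}}$.
\item The $L^1$ bound on the kernel difference needs $|e^{-a}-e^{-b}|\le |a-b|\,e^{-\min(a,b)}$, not just $|a-b|$.
\end{itemize}
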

	
	\begin{remark}\label{ref:remark1}
		We point out that the well-posedness result for the CH equation was established in \cite{zhang2026commutatorestimatesapplicationstransporttype} in the subcritical Triebel-Lizorkin spaces $F^s_{p,q}(\mathbb{R})$ for $s>\max(1+\frac{1}{p},\frac{3}{2})$, $(p,q)\in[1,\infty)\times[1,\infty]$ or $p=q=\infty$. Nevertheless, by applying the Lagrangian transformation used in the uniqueness proof of Theorem \ref{thm:abstract-Local-well-posedness}, one readily gets the local well-posedness for \eqref{CH-transport-equation} in $F^s_{p,q}(\mathbb{R})$ as $s>1+\frac{1}{p}$ with $(p,q)\in[1,\infty)\times[1,\infty]$, thereby the condition $s>\frac{3}{2}$ can be removed and hence it improves the corresponding result in \cite{zhang2026commutatorestimatesapplicationstransporttype}.
	\end{remark}
	
	Recall that $F^1_{\infty,\infty}(\mathbb{R})=B^1_{\infty,\infty}(\mathbb{R})$. It has already been shown in \cite{MR3906228} that the CH equation is ill-posed via norm inflation in $F^1_{\infty,\infty}(\mathbb{R})$. For the remaining range $1<p<\infty$ and $1\leq q\leq\infty$, we can also prove that \eqref{CH-transport-equation} exhibits norm inflation in $F^{1+1/p}_{p,q}(\mathbb{R})$ (see Theorem \ref{thm:CH-illposedness} below). Compared with the Besov space result in \cite{MR3906228}, the Triebel-Lizorkin setting presents a more delicate problem. The main difficulty stems from the contrasting structures of the two norms. The Besov norm is an $l^q(L^p)$ norm: one first computes the $L^p$ norm of each dyadic frequency block, and then the $l^q$ norm over the block index. Because the $l^q$-summation is taken after the $L^p$ norms, the contributions from different dyadic blocks do not interfere substantially with one another. This is why the initial data in \cite{MR3906228} can be built as a simple superposition $\sum_ka_kh_k$ with the $h_k$ having disjoint Fourier supports. Up to equivalence, the Besov norm then factorises into a sequence norm of the coefficients $a_k$ alone. The Triebel-Lizorkin norm, in contrast, is an $L^p(l^q)$ norm: the $l^q$ combination of the frequency blocks is taken pointwise in $x$ before the outer $L^p$ integration, so that the contributions from all dyadic scales are entangled at every spatial point. In this topology, the block-by-block construction that works so well for Besov spaces offers no direct control. To overcome this difficulty, we adopt an atomic approach, using the smooth atomic decomposition for Triebel-Lizorkin spaces (see Lemmas \ref{lem:F-atoms} and \ref{lem:atom-1}). This atomic decomposition is the key to making the construction of counterexamples tractable. More precisely, we prove the following result.
	
	\begin{theorem}\label{thm:CH-illposedness}
		Let $1< p<\infty$ and $1\leq q\leq \infty$. $\forall\,\varepsilon>0$, there exists $u_0\in H^\infty(\mathbb{R})$, real-valued, such that the following hold:
		\begin{enumerate}
			\item $\|u_0\|_{F^{1+\frac{1}{p}}_{p,q}(\mathbb{R})}\leq \varepsilon$;
			\item There is a unique solution $u\in C([0,T);H^\infty(\mathbb{R}))$ to the Cauchy problem \eqref{CH-transport-equation} with a maximal lifespan $T<\varepsilon$;
			\item  $\limsup_{t\to T^-}\|u(t)\|_{F^{1+\frac{1}{p}}_{p,q}(\mathbb{R})}\geq \limsup_{t\to T^-}\|u(t)\|_{F^1_{\infty,\infty}(\mathbb{R})}=\infty$.
		\end{enumerate}
	\end{theorem}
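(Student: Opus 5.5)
We follow the strategy of \cite{MR3906228} (Guo--Liu--Molinet--Yin), which reduces norm inflation to a blow-up criterion for $\partial_x u$ along characteristics, and replace the Besov construction of the initial datum by one based on the smooth atomic decomposition (Lemmas \ref{lem:F-atoms} and \ref{lem:atom-1}).

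\textbf{Step 1: blow-up mechanism.} For $u_0\in H^\infty(\mathbb{R})$ the classical theory gives a unique maximal solution $u\in C([0,T);H^\infty)$, and $T<\infty$ holds if and only if $\sup_x \partial_x u(t,x)\to-\infty$ as $t\to T^-$, while $\|u\|_{L^\infty}$ stays bounded by $C\|u_0\|_{H^1}$. Set $M(t)=\partial_xu(t,q(t,x_0))$ along the flow $\partial_t q=u(t,q)$. Differentiating \eqref{CH-transport-equation} gives
\[
\frac{d}{dt}M=-\tfrac12 M^2+u^2-(1-\partial_x^2)^{-1}\big(u^2+\tfrac12(\partial_xu)^2\big)\le -\tfrac12 M^2+C\|u_0\|_{H^1}^2 .
\]
Hence if $\partial_xu_0(x_0)\le -K$ with $K^2\gg\|u_0\|_{H^1}^2$, a Riccati comparison yields blow-up before time $T\lesssim 1/K$. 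Since blow-up is of $\partial_xu$ in $L^\infty$, we get $\|u(t)\|_{F^1_{\infty,\infty}}=\|u(t)\|_{B^1_{\infty,\infty}}\gtrsim\|\partial_xu(t)\|_{L^\infty}-C\|u(t)\|_{L^\infty}\to\infty$ along a sequence, using that $\|\partial_x f\|_{L^\infty}$ is controlled by the $B^1_{\infty,\infty}$ norm only up to logarithmic losses. More carefully, we work with $\partial_x u\in B^0_{\infty,\infty}$ and the fact that $\|\partial_x u(t)\|_{L^\infty}$ finite for $t<T$ but the $B^1_{\infty,\infty}$ norm of a Lipschitz function tending to infinity in the inf-sense; we follow \cite{MR3906228} here. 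The comparison in (3) then follows from the embedding $F^{1+1/p}_{p,q}\hookrightarrow B^{1+1/p}_{p,\infty}\hookrightarrow B^1_{\infty,\infty}$.

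\textbf{Step 2: construction of $u_0$.} We need $u_0$ that is small in $F^{1+1/p}_{p,q}$ and in $H^1$, yet has $\partial_xu_0(x_0)\le -K$ with $K$ huge. Take a smooth atom $\psi$ with $\psi'(0)=-1$ and
\[
u_0(x)=\sum_{k=N}^{N+L}a_k\,2^{-k}\psi(2^kx),\qquad a_k=\frac{\delta}{k},
\]
so that $\partial_xu_0(0)=-\delta\sum_k 1/k\approx-\delta\log(1+L/N)$, which is arbitrarily large for fixed small $\delta$ once $L$ is large. The $H^1$ norm is about $\delta(\sum_k 2^{-k}k^{-2})^{1/2}$, which is tiny. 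For the $F^{1+1/p}_{p,q}$ norm, Lemma \ref{lem:F-atoms} bounds it by the $f_{p,q}$ sequence norm
\[
\Big\|\Big(\sum_k |a_k|^q\chi_{Q_k}(x)\,2^{kq/p}\Big)^{1/q}\Big\|_{L^p},
\]
where $Q_k$ is the dyadic interval of length $2^{-k}$ at $0$ and the atoms carry the normalisation $2^{-k(s-1/p)}=2^{-k}$. At a point $|x|\sim2^{-j}$ the inner sum is dominated by the terms with $k\le j$, and is at most $C\,2^{j/p}\sup_{k\le j}|a_k|$ (for $q=\infty$ this is immediate, and for $q<\infty$ the geometric factor $2^{kq/p}$ makes the sum comparable to its top term). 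Integrating over $|x|\sim2^{-j}$ gives a contribution of $\sim|a_j|^p$, so the norm is about $\big(\sum_k|a_k|^p\big)^{1/p}=\delta\big(\sum_k k^{-p}\big)^{1/p}$, which is small for $p>1$ and $N$ large. This is exactly the point where $p>1$ enters: $\ell^p$ is strictly weaker than $\ell^1$, whereas $\partial_x u_0(0)$ sees the $\ell^1$ sum. Finally we truncate the sum to keep $u_0\in H^\infty$.

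\textbf{Main obstacle.} The main difficulty is the atomic norm computation in Step 2, which requires overlapping atoms centered at the same point across all scales with the correct geometric bookkeeping. There is also a second issue, the pointwise lower bound on $\partial_xu_0(0)$: the atoms must all have derivative of the same sign at $0$, and their tails must not cancel. Choosing $\psi$ odd-symmetric with compact support handles the sign condition. The choice $\delta=\varepsilon$, together with $N,L$ large, ensures $T\lesssim 1/K<\varepsilon$.
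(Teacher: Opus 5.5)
\textbf{Gap in item (3).} Your argument for $\limsup_{t\to T^-}\|u(t)\|_{F^1_{\infty,\infty}}=\infty$ does not work as written. The inequality $\|u\|_{B^1_{\infty,\infty}}\gtrsim\|\partial_x u\|_{L^\infty}-C\|u\|_{L^\infty}$ is false. The Zygmund class $B^1_{\infty,\infty}$ is strictly larger than the Lipschitz class: a cut-off of $x\log|x|$, mollified at scale $2^{-n}$, has $B^1_{\infty,\infty}$ norm bounded uniformly in $n$ but Lipschitz constant of order $n$. So the Riccati blow-up of $\partial_xu$ in $L^\infty$ does not by itself force blow-up of the $F^1_{\infty,\infty}$ norm, and your sentence beginning ``More carefully'' does not supply a substitute.

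The missing step is a contradiction argument that actually uses the logarithmic loss, through Lemma \ref{lem:Lguji}. Suppose $\sup_{t<T}\|u(t)\|_{F^1_{\infty,\infty}}\le M$. Combine $\|u_x\|_{L^\infty}\le C\|u\|_{F^1_{\infty,\infty}}\log_2(2+\|u\|_{H^2}^2)+C$ with the energy estimate $\frac{d}{dt}\|u\|_{H^2}^2\le C\|u_x\|_{L^\infty}\|u\|_{H^2}^2$. For $Y=\|u\|_{H^2}^2$ this gives $Y'\le C(M+1)Y\log_2(2+Y)$, so $Y$ stays bounded (double-exponentially) on $[0,T)$. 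That contradicts the $H^2$ blow-up criterion of Lemma \ref{lem:CH-jie} at the finite time $T$, or equivalently contradicts $\|u_x(t)\|_{L^\infty}\to\infty$. This is exactly how the paper closes the proof. The logarithmic loss you mention is harmless precisely because $\int^\infty\frac{dY}{Y\log Y}=\infty$, but it has to be used in this Osgood-type way, not circumvented.

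\textbf{The rest.} Apart from this, your approach is essentially the paper's. Its atoms $x\tilde a(2^kx)=2^{-k}\psi(2^kx)$, with $\psi(y)=y\tilde a(y)$ odd and coefficients $1/(k+1)$, are yours, and smallness again comes from $\ell^p$ versus $\ell^1$. Your finite sum $\sum_{k=N}^{N+L}$ is actually cleaner than the paper's $\varepsilon\|f\|^{-1}S_nf$. It gives $u_0\in C^\infty_c$ and $u_0'(0)=-\delta\sum_{k=N}^{N+L}k^{-1}$ exactly, with no need to control $S_n$ on $F^{1+1/p}_{p,q}$ or the limit of $(S_nf)'(0)$. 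Your Riccati bound (using $\|u_0\|_{H^1}\ll K$) replaces Lemma \ref{lem:CH-time}, which would also apply directly since your $u_0$ is odd.

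Two small repairs are needed:
\begin{enumerate}
\item The wave-breaking criterion is $\inf_x\partial_xu(t,x)\to-\infty$, not $\sup_x$.
\item The crude bound $C2^{j/p}\sup_{k\le j}|a_k|$ only gives $(\delta/N)^p$ on each dyadic annulus, which is not small after summing over $L$ annuli. You need the top-term bound $\lesssim|a_j|2^{j/p}$, which holds because $|a_k|2^{k/p}=\delta 2^{k/p}/k$ grows geometrically for $k\ge N$ large. Alternatively, as the paper does, apply Lemma \ref{lem:atom-1} with the disjoint sets $E_{k0}=\{2^{-k-1}<|x|\le 2^{-k}\}$. This gives the bound $(\sum_k|a_k|^p)^{1/p}$ immediately and independently of $q$.
\end{enumerate}
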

	
	\begin{remark}
		Taking $q=2$, Theorem \ref{thm:CH-illposedness} implies the ill-posedness of \eqref{CH-transport-equation} in critical Sobolev spaces $W^{1+\frac{1}{p},p}(\mathbb{R})$ with $1<p<\infty$, and provides a new perspective on the ill-posedness for \eqref{CH-transport-equation} in $H^\frac{3}{2}(\mathbb{R})$, a long-standing open problem for the CH equation.
	\end{remark}
	
	Let us summarize the complete well-posedness/ill-posdeness results of \eqref{CH-transport-equation} in the Triebel-Lizorkin spaces, which can be seen clearly from the Table 1.
	\begin{table}[h]
		\centering
		\small 
		\begin{tabular}{c|c|c}
			\hline
			\textup{References} & \textup{Range} & \textup{Results} \\ \hline
			\textup{
			Remark \ref{ref:remark1}} and
			\cite{zhang2026commutatorestimatesapplicationstransporttype}
			& $s>1+\frac{1}{p},1\leq p<\infty,1\leq q\leq\infty$ & \textup{Local well-posedness} \\ \hline
			\textup{Theorem \ref{thm:CH-wellposedness}}&$s=2,p=1,1\leq q<\infty$& \textup{Local well-posedness} \\ \hline
			\textup{Theorem \ref{thm:CH-illposedness}}&$s=1+\frac{1}{p},1< p<\infty,1\leq q\leq\infty$ & \textup{Norm inflation} \\ \hline
			\textup{\cite{MR3906228}} & $s=1,p=q=\infty$ & \textup{Norm inflation} \\ \hline
		\end{tabular}
		\caption{Well/Ill-posedness of \eqref{CH-transport-equation} in $F^s_{p,q}(\mathbb{R})$}
	\end{table}

		\begin{remark}
		It is worth pointing out that the analogical results with Theorems \ref{thm:CH-wellposedness} and \ref{thm:CH-illposedness} also hold true for the other related shallow water models, such as the Degasperis-Procesi equation, $b$-family equation, Novikov equation and two-component Camassa-Holm system.
	\end{remark}

	The remainder of the paper is organized as follows. In Section 2, we recall some facts on Triebel-Lizorkin spaces and the theory of transport equations. In Section 3, we establish a well-posedness result (Theorem \ref{thm:abstract-Local-well-posedness}) for a general abstract equation \eqref{general-abstract-equation} in critical Triebel-Lizorkin spaces, and then apply it to obtain the well-posedness for the CH equation (Theorem \ref{thm:CH-wellposedness}). In Section 4, we introduce the atomic decomposition to prove the ill-posedness result Theorem \ref{thm:CH-illposedness}.
	
	\section{Preliminaries}\label{section:2}
	In this section, we provide some basic facts concerning the Littlewood-Paley decomposition, Triebel-Lizorkin spaces and the transport equation theory. Let $\mathscr{B}=\{\xi\in\mathbb{R}\colon|\xi|\leq \frac{4}{3} \}$ and $\mathscr{C}=\{\xi\in\mathbb{R}\colon\frac{3}{4}\leq |\xi|\leq \frac{8}{3} \}$, $(\varphi,\chi)$ be a couple of smooth functions valued in $[0,1]$, such that $\varphi$ is support in  $\mathscr{C}$, $\chi$ is supported in $\mathscr{B}$ and 
	\[ \forall \xi \in \mathbb{R},\quad \chi(\xi)+\sum_{j\in\mathbb{N}}\varphi(2^{-j}\xi)=1. \]
	We denote $\varphi_j(\xi)=\varphi(2^{-j}\xi),h=\mathscr{F}^{-1}\varphi$ and $\tilde{h}=\mathscr{F}^{-1}\chi$. For $f\in\mathscr{S}'(\mathbb{R})$, one can define nonhomogeneous dyadic blocks as follow:
	\begin{equation}
		\Delta_jf\triangleq0,\quad\text{if}\quad j\leq -2,
	\end{equation}
	\begin{equation}
		\Delta_{-1}f\triangleq\chi(D)f=\tilde{h}*f,
	\end{equation}
	\begin{equation}\label{def:Delta_j}
		\Delta_jf\triangleq\varphi(2^{-j}D)f=2^{jd}\int_{\mathbb{R}}h(2^jy)f(x-y)dy,\quad\text{if}\quad  j\geq 0,
	\end{equation}
	\begin{equation}\label{def:S_j}
		S_jf\triangleq\sum_{k\leq j-1}\Delta_kf=\chi(2^{-j}D)f=2^{jd}\int_{\mathbb{R}}\tilde{h}(2^jy)f(x-y)dy.
	\end{equation}
	Let $s\in\mathbb{R},(p,q)\in [1,\infty)\times[1,\infty]$ or $p=q=\infty$. The Triebel-Lizorkin space $F^s_{p,q}(\mathbb{R})$\footnote{For the endpoint case $p=q=\infty$, $F^s_{\infty,\infty}$ is equipped with the norm
		\[ \|f\|_{F^s_{\infty,\infty}}=\sup_{j\in \mathbb{Z}}2^{js}\|\Delta_jf\|_{L^\infty}, \]
		which coincides with the Besov spaces $B^s_{\infty,\infty}$.} ($F^s_{p,q}$ for short) is defined by \cite{MR781540}
	\begin{equation}\label{def:norm_nonhomogeneous T-L}
		F^s_{p,q}(\mathbb{R})=\Big\{f\in \mathscr{S}'(\mathbb{R})\colon \|f\|_{F^s_{p,q}}=\Big\|\big\|2^{js}|\Delta_jf|\big\|_{l^q(j\geq -1)}\Big\|_{L^p(\mathbb{R})}<\infty\Big\}.
	\end{equation}
	
	Let us give some classical properties for the Triebel-Lizorkin as follows. 
	\begin{lemma}\label{lem:Triebel-Lizorkin-properties}
	    \textup{(\cite{MR1419319,MR781540})} The following properties hold:
		\begin{enumerate}[label={(\roman*)}]
		\item Embedding: Let $\varepsilon>0$ and suppose $q_0<q_1$, $p_0<p_1 $ and $s_0-\frac{1}{p_0}=s_1-\frac{1}{p_1}$. Then
		\[ F^s_{p,q_0}(\mathbb{R})\hookrightarrow F^s_{p,q_1}(\mathbb{R}),\qquad F^{s+\varepsilon}_{p,q}(\mathbb{R})\hookrightarrow F^s_{p,q}(\mathbb{R}),\qquad F^{s_0}_{p_0,\infty}(\mathbb{R})\hookrightarrow F^{s_1}_{p_1,q}(\mathbb{R}). \]
		\item Compact embedding: If $s_1<s_2$, then the embedding $F^{s_2}_{p,q}\hookrightarrow F^{s_1}_{p,q}$ is locally compact.
		\item Algebraic properties: For $s>0$, $F^s_{p,q}\cap L^\infty$ is an algebra. Moreover, $F^s_{p,q}(\mathbb{R}) $ is an algebra $\Longleftrightarrow$ $F^s_{p,q}(\mathbb{R})\hookrightarrow L^\infty(\mathbb{R})\Longleftrightarrow s>\frac{1}{p}$ (or $s\geq 1$ and $p=1$).
		\item Fatou property: If the sequence $\{f_k\}_{k\in\mathbb{N}^+}$ is uniformly bounded in $F^s_{p,q}$ and converges weakly in $\mathscr{S}'$ to $f$, then $f\in F^s_{p,q}$ and $\|f\|_{F^s_{p,q}}\leq \liminf\limits_{k\to\infty}\|f_k\|_{F^s_{p,q}}$.
		\item Complex interpolation: Let $1\leq p_0,q_0\leq \infty,1\leq p_1,q_1\leq \infty,0<\theta<1$ and
		\[ \frac{1}{q}=\frac{1-\theta}{q_0}+\frac{\theta}{q_1},\quad\frac{1}{p}=\frac{1-\theta}{p_0}+\frac{\theta}{p_1},\quad  s=(1-\theta)s_0+\theta s_1. \] 
		Then we have
		\[ \|f\|_{F^{s}_{p,q}}\leq\|f\|_{F^{s_0}_{p_0,q_0}}^{1-\theta}\|f\|_{F^{s_1}_{p_1,q_1}}^\theta,\quad \forall \ f\in F^{s_0}_{p_0,q_0}\cap F^{s_1}_{p_1,q_1}.\] 
		\item Lifting property: Let $I_{\sigma}f(x)\triangleq\mathscr{F}^{-1}[(1+|\xi|^2)^{\frac{\sigma}{2}}\mathscr{F}f(\xi)](x),f\in \mathscr{S}'(\mathbb{R}),\sigma\in\mathbb{R}.$
		Then $I_{\sigma}$ maps $F^s_{p,q}$ isomorphically onto $F^{s-\sigma}_{p,q}$. Furthermore,
		\[ \sum_{|\alpha|\leq m}\|D^\alpha f\|_{F^{s-m}_{p,q}(\mathbb{R})} \qquad\text{and}\qquad \|f\|_{F^{s-m}_{p,q}(\mathbb{R})}+\Big\|\frac{\partial^mf}{\partial x^m}\Big\|_{F^{s-m}_{p,q}(\mathbb{R})}\]
		are equivalent norms on $F^s_{p,q}(\mathbb{R})$.
		\end{enumerate}
	\end{lemma}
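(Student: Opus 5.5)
The statement immediately above is Lemma \ref{lem:Triebel-Lizorkin-properties}. It collects classical facts about Triebel-Lizorkin spaces, so I will not build it from scratch. The plan is to reduce each item to the standard theory in \cite{MR781540,MR1419319} and to check only the points where the one-dimensional nonhomogeneous setting, the endpoint $p=q=\infty$, or the case $p=1$ needs a remark.

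\emph{Items (i) and (ii).} The embedding $F^s_{p,q_0}\hookrightarrow F^s_{p,q_1}$ is the pointwise inclusion $l^{q_0}\hookrightarrow l^{q_1}$, followed by integration in $L^p$. The embedding $F^{s+\varepsilon}_{p,q}\hookrightarrow F^s_{p,q}$ follows from the pointwise bound
\[
\big\|2^{js}|\Delta_jf|\big\|_{l^q}\le C_\varepsilon\sup_j 2^{j(s+\varepsilon)}|\Delta_jf|\le C_\varepsilon \big\|2^{j(s+\varepsilon)}|\Delta_jf|\big\|_{l^q}.
\]
The Jawerth--Franke embedding $F^{s_0}_{p_0,\infty}\hookrightarrow F^{s_1}_{p_1,q}$ is the one genuinely nontrivial statement. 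I would quote it directly from \cite{MR781540}; its proof uses Nikol'skii--Bernstein inequalities for $\Delta_j f$ together with a Peetre maximal function argument. For the local compactness in (ii), I would multiply by a cutoff $\psi\in C_c^\infty$ and truncate to low frequencies $S_N$. Each truncation is compact by Arzel\`a--Ascoli, and the tail is bounded by $2^{-N(s_2-s_1)}\|f\|_{F^{s_2}_{p,q}}$, which tends to zero uniformly on bounded sets.

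\emph{Items (iii) and (iv).} For the algebra property I would use Bony's paraproduct decomposition $fg=T_fg+T_gf+R(f,g)$. The paraproduct terms $T_fg$ and $T_gf$ are bounded by $\|f\|_{L^\infty}\|g\|_{F^s_{p,q}}$ and $\|g\|_{L^\infty}\|f\|_{F^s_{p,q}}$. This uses the vector-valued Fefferman--Stein maximal inequality, which is available for $1<p<\infty$, while $p=1$ and $p=\infty$ are handled by the standard Peetre-type maximal function estimates in \cite{MR781540}. The remainder $R(f,g)$ is where $s>0$ is used. The characterization of when $F^s_{p,q}$ is an algebra reduces to the embedding into $L^\infty$, which holds exactly when $s>1/p$, or when $s=1$ and $p=1$, since $F^1_{1,q}\subset W^{1,1}\subset L^\infty$ in one dimension. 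The necessity direction comes from the standard unbounded counterexamples of the form $\log\log$. For the Fatou property, weak convergence in $\mathscr S'$ gives $\Delta_jf_k\to\Delta_jf$ pointwise, because each $\Delta_j f_k$ is a convolution with a Schwartz kernel. Applying Fatou's lemma first in $l^q$ and then in $L^p$ yields the inequality.

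\emph{Items (v) and (vi).} For (v), the complex interpolation identity $[F^{s_0}_{p_0,q_0},F^{s_1}_{p_1,q_1}]_\theta=F^s_{p,q}$ is the Frazier--Jawerth/Triebel result. The displayed inequality follows from the three-lines lemma applied to the retraction onto $L^{p}(l^q_s)$. Alternatively, for fixed $x$ one can apply H\"older's inequality directly: in $l^q$ with exponents $q_0/(1-\theta)$ and $q_1/\theta$, and then in $L^p$. This gives the inequality with constant $1$ and avoids any interpolation machinery, and it is the route I would actually write down. For (vi), the lifting property follows from Mikhlin-type Fourier multiplier estimates on each dyadic block: $I_\sigma\Delta_j$ equals $2^{j\sigma}$ times an operator whose kernel satisfies uniform bounds. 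The equivalent norms then come from writing $f=I_{-m}(1+D^2)^{m/2}f$ and observing that $\xi^m(1+\xi^2)^{-m/2}$ is a multiplier away from the origin.

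The main obstacle is the Jawerth--Franke embedding in (i), together with the endpoint cases $p=1$ and $p=\infty$ of the maximal estimates. These cannot be proved in a few lines, so I would cite them rather than reprove them.
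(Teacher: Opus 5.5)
\textbf{Overall.} The paper gives no proof of this lemma; it only cites Triebel and Runst--Sickel. Your plan of quoting the hard items and sketching the routine ones is therefore the same in spirit, and most of your sketches are sound. One step, however, fails as written.

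\textbf{The gap in (iii).} You justify the endpoint case $s=p=1$ by $F^1_{1,q}\subset W^{1,1}\subset L^\infty$. The first inclusion holds only for $q\le 2$, via $F^1_{1,q}\subset F^1_{1,2}$ and $F^0_{1,2}=h^1\subset L^1$. For $q>2$ it is false. Indeed, $F^0_{1,q}\not\subset L^1_{\mathrm{loc}}$: a localized lacunary series with coefficients in $l^q\setminus l^2$ is a counterexample, by Zygmund's theorem. If $h\in F^0_{1,q}\setminus L^1_{\mathrm{loc}}$, then $f=\partial_x(1-\partial_x^2)^{-1}h$ lies in $F^1_{1,q}$, yet $f'=-h+(1-\partial_x^2)^{-1}h\notin L^1_{\mathrm{loc}}$.

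This matters because the paper uses that $F^1_{1,q}$ is an algebra for every $1\le q<\infty$, in Lemma \ref{lem:continuity-transport} and in the proof of Theorem \ref{thm:CH-wellposedness}.

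The conclusion $F^1_{1,q}\hookrightarrow L^\infty$ is still true for all $q\le\infty$, but it needs another argument. There are two routes.
\begin{itemize}
\item Jawerth's embedding gives $F^1_{1,q}\hookrightarrow F^1_{1,\infty}\hookrightarrow B^0_{\infty,1}\hookrightarrow L^\infty$.
\item The paper's own Lemma \ref{lem:F-atoms} also works. For $s=p=1$ the atoms satisfy $|a_{\nu m}|\le 1$ and have bounded overlap in $m$, so $\|f\|_{L^\infty}\lesssim\sum_\nu\sup_m|\lambda_{\nu m}|$. A layer-cake argument bounds this sum by $4\|\lambda\|_{f_{1\infty}}\le 4\|\lambda\|_{f_{1q}}$. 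For each $t>0$, the set where $\sup_{\nu,m}|\lambda_{\nu m}|\chi^{(1)}_{\nu m}>t$ contains an interval of length $2^{1-\nu}$ for every $\nu$ with $2^{\nu-2}\sup_m|\lambda_{\nu m}|>t$. These lengths form a geometric series, so the sum is controlled by the largest one.
\end{itemize}

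\textbf{Item (v): your route is better than the paper's.} Your pointwise H\"older argument applies H\"older in $l^q$ with exponents $q_0/(1-\theta)$ and $q_1/\theta$, then in $L^p$. This fits the statement more closely than the paper's appeal to complex interpolation. The identity $[F^{s_0}_{p_0,q_0},F^{s_1}_{p_1,q_1}]_\theta=F^s_{p,q}$ holds only up to equivalent norms, so citing it yields the inequality with some constant $C$. H\"older gives it with constant $1$, exactly as displayed, for the norms as defined in Section \ref{section:2}, including the $p=\infty$ endpoints.

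\textbf{Smaller points.}
\begin{itemize}
\item The $F\to F$ Sobolev embedding in (i) is usually attributed to Jawerth; ``Jawerth--Franke'' refers to the mixed $F\to B$ and $B\to F$ embeddings. Its proof needs only Nikol'skii's inequality and a frequency cut at an $x$-dependent level, with no maximal function.
\item In (ii), Arzel\`a--Ascoli must be supplemented by uniform decay of $S_N(\psi f)$ away from $\mathrm{supp}\,\psi$ to give compactness in $L^p(\mathbb{R})$.
\item In (vi), the symbol that must be a Fourier multiplier for the nontrivial direction is $(1-\chi(\xi))(1+\xi^2)^{m/2}\xi^{-m}$, not $\xi^m(1+\xi^2)^{-m/2}$. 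The low frequencies are controlled by $\|f\|_{F^{s-m}_{p,q}}$.
\end{itemize}
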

	
	We now recall the theory of transport equation in the framework of Triebel-Lizorkin spaces.

	\begin{lemma}\label{lem:a-priori-estimates}
		\textup{(\cite{zhang2026commutatorestimatesapplicationstransporttype})}(A priori estimates for transport equation) Let $(p,q)\in[1,\infty)\times[1,\infty]$ or $p=q=\infty$. Assume that $s>-\min\big(\frac{1}{p},1-\frac{1}{p}\big)$, and $\partial_xv$ belongs to $L^1(0,T;F^{s-1}_{p,q}(\mathbb{R}))$ if $s>1+\frac{1}{p}$ (or $s=1+\frac{1}{p}$ and $p=1$), or to $L^1(0,T;F^{\frac{1}{p}}_{p,\infty}(\mathbb{R})\cap L^\infty(\mathbb{R}))$ otherwise. Suppose that $f_0\in F^s_{p,q}(\mathbb{R})$,
		 $g \in L^1(0,T;F^s_{p,q}(\mathbb{R}))$ and that 
		$f\in L^\infty(0,T;F^s_{p,q}(\mathbb{R}))$ solves the following transport equation:
		\begin{equation}\label{eq:transport}\tag{T}
			\begin{cases}
				\partial_tf+v\partial_x f=g,\\
				f|_{t=0}=f_0.			
			\end{cases}
		\end{equation}
		Then there exists a constant $C$ depending only on $p,q$ and $s$, such that the following estimates hold:
		\begin{equation}
			\|f(t)\|_{F^s_{p,q}}\leq \|f_0\|_{F^s_{p,q}}+\int_{0}^t\|g(\tau)\|_{F^s_{p,q}}d\tau+C\int_{0}^tZ(\tau)\|f(\tau)\|_{F^s_{p,q}}d\tau,
		\end{equation}
		or hence, 
		\begin{equation}\label{eq:a-priori-estimates}
			\|f(t)\|_{F^s_{p,q}}\leq
			e^{C\int_{0}^tZ(\tau)d\tau} \Big(\|f_0\|_{F^s_{p,q}}+\int_0^te^{-C\int_{0}^\tau Z(\tau')d\tau'}\|g(\tau)\|_{F^s_{p,q}}d\tau\Big)
		\end{equation}
		with
		\begin{equation*}
			Z(t)=\begin{cases}
				\|\partial_x v(t)\|_{F^{\frac{1}{p}}_{p,\infty}\cap L^\infty},\ &\text{if}\quad s<1+\frac{1}{p},\\
				\|\partial_xv(t)\|_{F^{s-1}_{p,q}},\ &\text{if}\quad s>1+\frac{1}{p}\ \text{or}\ s=1+\frac{1}{p}\ \text{and}\quad p=1.
			\end{cases}
		\end{equation*}
		If moreover $f=v$, then for all $s>0$ estimates \eqref{eq:a-priori-estimates} hold with $Z(t)=\|\partial_xv(t)\|_{L^\infty}.  $ 
	\end{lemma}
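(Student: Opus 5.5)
The statement to be proved here is Lemma \ref{lem:a-priori-estimates}, the a priori estimate for the transport equation \eqref{eq:transport} in $F^s_{p,q}$. The plan follows the classical Besov argument of Bahouri--Chemin--Danchin, adapted so that the $l^q$ summation is taken pointwise before the $L^p$ integration.

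\textbf{Localization.} Apply $\Delta_j$ to \eqref{eq:transport}. This gives
\[ \partial_t\Delta_jf+v\partial_x\Delta_jf=\Delta_jg+R_j,\qquad R_j\triangleq[v,\Delta_j]\partial_xf. \]
Let $X_v$ be the flow of $v$. Integrating along characteristics yields the pointwise representation
\[ \Delta_jf(t,X_v(t,\alpha))=\Delta_jf_0(\alpha)+\int_0^t(\Delta_jg+R_j)(\tau,X_v(\tau,\alpha))\,d\tau. \]
Multiply by $2^{js}$, take the $l^q$ norm in $j$ for fixed $\alpha$, and apply Minkowski's inequality in time. Then take the $L^p$ norm in $\alpha$ and change variables $x=X_v(t,\alpha)$. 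The Jacobian is $\partial_\alpha X_v=\exp(\int_0^t\partial_xv)$, so this step costs only a factor $e^{C\int_0^t\|\partial_xv\|_{L^\infty}}$, and $\|\partial_xv\|_{L^\infty}\le CZ$ in every case. We obtain
\[ \|f(t)\|_{F^s_{p,q}}\le e^{C\int_0^tZ}\Big(\|f_0\|_{F^s_{p,q}}+\int_0^t\big(\|g\|_{F^s_{p,q}}+\|(2^{js}R_j)_j\|_{L^p(l^q)}\big)d\tau\Big). \]
Alternatively one can do an $L^p$ energy estimate on the vector $(2^{js}\Delta_jf)_j$ with the convex functional $\|\cdot\|_{l^q}^p$, which gives the non-exponential form directly. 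Gronwall then produces \eqref{eq:a-priori-estimates}.

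\textbf{Commutator estimate.} The core of the proof is the bound
\[ \big\|\,\|2^{js}[v,\Delta_j]\partial_xf\|_{l^q}\big\|_{L^p}\le CZ(t)\|f\|_{F^s_{p,q}}. \]
To prove it, decompose the commutator with Bony's paraproduct:
\[ [v,\Delta_j]\partial_xf=[T_v,\Delta_j]\partial_xf+T'_{\Delta_j\partial_xf}v-\Delta_j T_{\partial_xf}v-\Delta_jR(v,\partial_xf). \]
The terms are handled as follows.
\begin{enumerate}
\item The main commutator is written as a convolution, $[S_{k-1}v,\Delta_j]\partial_x\Delta_kf=2^{j}\int h(2^jy)\big(S_{k-1}v(x)-S_{k-1}v(x-y)\big)\partial_x\Delta_kf(x-y)\,dy$ with $|k-j|\le4$. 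It is then bounded pointwise by $\|\partial_xv\|_{L^\infty}\,2^{j s}\mathcal{M}(\Delta_kf)(x)$, where $\mathcal{M}$ is the Hardy--Littlewood maximal function.
\item The Fefferman--Stein vector-valued maximal inequality for $L^p(l^q)$ controls this term when $1<p<\infty$. For $p=1$ or $p=\infty$, replace it by Peetre maximal functions, which is the standard substitute in Triebel--Lizorkin theory.
\item The paraproduct terms with low frequencies on $\partial_xf$ and the remainder terms use $s>-\min(\frac1p,1-\frac1p)$. In the case $s\ge1+\frac1p$ they use $\|\partial_xv\|_{F^{s-1}_{p,q}}$; in the case $s<1+\frac1p$ they use $\|\partial_xv\|_{F^{1/p}_{p,\infty}\cap L^\infty}$.
\end{enumerate}

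\textbf{The case $f=v$.} Here the terms in which all derivatives fall on $v$ are symmetric. They can be bounded by $\|\partial_xv\|_{L^\infty}\|v\|_{F^s_{p,q}}$ through the tame product estimate $\|fg\|_{F^s}\lesssim\|f\|_{L^\infty}\|g\|_{F^s}+\|g\|_{L^\infty}\|f\|_{F^s}$ for $s>0$.

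\textbf{Main obstacle.} The hardest step is the commutator estimate, specifically the remainder $R(v,\partial_xf)$ and the term $T_{\partial_xf}v$ in the critical regime. There $F$-spaces do not allow summing blocks one at a time, and for $p=1$ the maximal inequality fails. I would first handle these with the Peetre maximal function and the inequality $\|\sum_k \Delta_k(a_k)\|_{F^s_{p,q}}\lesssim\|(2^{ks}a_k)\|_{L^p(l^q)}$, valid for $s>\frac1{\min(p,q,1)}-1$ when spectra lie in balls. I would then check carefully that the hypothesis $s>-\min(\frac1p,1-\frac1p)$ suffices in each case.
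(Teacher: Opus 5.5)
The paper gives no proof of this lemma; it is quoted from \cite{zhang2026commutatorestimatesapplicationstransporttype}, so your outline has to stand on its own. The architecture is right. Transporting $(2^{js}\Delta_jf)_j$ along the flow costs only a factor $e^{\frac1p\int_0^t\|\partial_xv\|_{L^\infty}}$, and $\|\partial_xv\|_{L^\infty}\lesssim Z$ in all cases. Everything then rests on the $L^p(l^q)$ commutator bound.

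\textbf{Main gap.} Your sketch does not handle the one term where Triebel--Lizorkin really differs from Besov: $\Delta_jT_{\partial_xf}v$ for $1\le s<1+\frac1p$. The Besov argument bounds $|\Delta_jv|\le 2^{-j(1+\frac1p)}A$, where $A=\sup_k2^{k(1+\frac1p)}|\Delta_kv|$ and $\|A\|_{L^p}\lesssim\|\partial_xv\|_{F^{1/p}_{p,\infty}}$. It then sums the low frequencies of $f$ in $l^q$ after Bernstein. Done pointwise, this requires
\[
\sup_x\|(2^{l(s-\frac1p)}\Delta_lf(x))_l\|_{l^q}\lesssim\|f\|_{F^s_{p,q}} .
\]
\begin{itemize}
\item For $q\ge p$ this holds via $F^s_{p,q}\hookrightarrow B^{s-\frac1p}_{\infty,p}$.
\item For $q<p$ it fails. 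Take $f=\sum_{l\le N}2^{-l(s-\frac1p)}\psi(2^l\cdot)$ with $\hat\psi\ge0$ supported in an annulus. The left side is $\gtrsim N^{1/q}$ at $x=0$, while $\|f\|_{F^s_{p,q}}\lesssim N^{1/p}$.
\item Putting the $l^q$ sum on $v$ instead needs $\partial_xv\in F^{1/p}_{p,q}$ rather than $F^{1/p}_{p,\infty}$.
\item Using only $|\Delta_jv|\lesssim 2^{-j}\|\partial_xv\|_{L^\infty}$ needs $s<1$.
\end{itemize}
Peetre maximal functions and the ball-summation lemma do not help here. The obstruction is which factor carries the $l^q$ sum and which Lebesgue exponent it sits in, not boundedness of maximal operators.

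One way to close it is to interpolate the two bounds pointwise:
\[
|\Delta_jv|\lesssim A^{\theta}\|\partial_xv\|_{L^\infty}^{1-\theta}\,2^{-j(1+\theta/p)},\qquad p(s-1)<\theta<1 .
\]
Then apply H\"older in $L^{p/\theta}\times L^{p/(1-\theta)}$, together with the embedding $F^{s-1}_{p,q}\hookrightarrow F^{s-1-\theta/p}_{p/(1-\theta),q}$ from Lemma~\ref{lem:Triebel-Lizorkin-properties}(i). Since $s-1-\theta/p<0$, the sequence $2^{j(s-1-\theta/p)}S_{j-1}\partial_xf$ is controlled in $L^{p/(1-\theta)}(l^q)$ by Young's inequality in $j$. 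This is where both parts of the hypothesis $F^{1/p}_{p,\infty}\cap L^\infty$ are needed together.

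\textbf{The case $f=v$.} Here you invoke the wrong tool. The tame product estimate applied to $v\,\partial_xv$ gives a term $\|v\|_{L^\infty}\|\partial_xv\|_{F^s_{p,q}}$, which is one derivative too many. The actual reason is that, for $f=v$, the low-frequency factor in $T_{\partial_xf}v$ is $S_{j-1}\partial_xv$, which is bounded by $C\|\partial_xv\|_{L^\infty}$. Hence $\|T_{\partial_xv}v\|_{F^s_{p,q}}\lesssim\|\partial_xv\|_{L^\infty}\|v\|_{F^s_{p,q}}$ for every $s$. The remainder $\sum_k\Delta_kv\,\tilde\Delta_k\partial_xv$, with $\tilde\Delta_k=\sum_{|k'-k|\le1}\Delta_{k'}$, falls under the ball lemma exactly when $s>0$.

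\textbf{Smaller slips.}
\begin{itemize}
\item You state the second case as $s\ge 1+\frac1p$. At the endpoint, the bound on $S_{j-1}\partial_xf$ uses $F^1_{1,q}\hookrightarrow L^\infty$, so only $p=1$ is admissible, as in the statement.
\item Fefferman--Stein also fails for $q=1$. Peetre maximal functions are therefore needed whenever $\min(p,q)=1$, not only when $p=1$.
\end{itemize}
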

	\begin{lemma}\label{lem:solve-transport}
		\textup{(\cite{zhang2026commutatorestimatesapplicationstransporttype})}(Well-posedness for transport equation) Let $p,q,s,f_0$ and $g$ be as in the statement of Lemma \ref{lem:a-priori-estimates}. Assume that $v\in L^\rho(0,T;F^{-M}_{\infty,\infty}(\mathbb{R}))$ for some $\rho>1$ and $M>0$, and such that $\partial_xv\in L^1(0,T;F^{\frac{1}{p}}_{p,\infty}(\mathbb{R})\cap L^\infty(\mathbb{R}))$ if $s<1+\frac{1}{p}$, and $\partial_x v\in L^1(0,T;F^{s-1}_{p,q}(\mathbb{R}))$ if $s>1+\frac{1}{p}$ or $s=2$ with $p=1$. Then the transport equation \eqref{eq:transport} has a unique solution $f\in L^\infty([0,T];F^s_{p,q}(\mathbb{R}))\bigcap(\cap_{s'<s}C([0,T];F^{s'}_{p,1}(\mathbb{R}))) $ and the estimates in Lemma \ref{lem:a-priori-estimates} hold true. If moreover $q<\infty$, then $f\in C([0,T];F^s_{p,q}(\mathbb{R}))$. Furthermore, the data-to-solution map $f_0\mapsto f$ is continuous from $F^s_{p,q}(\mathbb{R})$ into $L^\infty ([0,T];F^{s}_{p,q}(\mathbb{R}))\cap C([0,T];F^{s'}_{p,1}(\mathbb{R}))$ for every $s'<s$ if $q=\infty$, and into $C ([0,T];F^{s}_{p,q}(\mathbb{R}))$ if $q<\infty$.
	\end{lemma}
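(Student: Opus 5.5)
The plan follows the classical Friedrichs-regularization scheme for linear transport equations, as in the Besov setting of Danchin, with the a priori bounds of Lemma \ref{lem:a-priori-estimates} replacing the Besov estimates.

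\textbf{Existence.} Set $f_0^n=S_nf_0$, $g^n=S_ng$ and $v^n=S_nv$. For each $n$, $v^n$ is smooth in $x$ with all derivatives in $L^1(0,T;L^\infty)$, because $\partial_x v\in L^1(0,T;F^{\frac1p}_{p,\infty}\cap L^\infty)$ (respectively $F^{s-1}_{p,q}$) and $v\in L^\rho(0,T;F^{-M}_{\infty,\infty})$. So the Cauchy–Lipschitz flow $\psi^n$ of $v^n$ exists, and
\[
f^n(t)=f_0^n\circ(\psi^n_t)^{-1}+\int_0^t g^n(\tau)\circ\psi^n_\tau\circ(\psi^n_t)^{-1}\,d\tau
\]
is a smooth solution of the regularized problem. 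Since $S_n$ is uniformly bounded on $F^\sigma_{p,q}$, the quantity $Z^n$ for $v^n$ is bounded by $CZ$. Lemma \ref{lem:a-priori-estimates} then gives a uniform bound for $f^n$ in $L^\infty(0,T;F^s_{p,q})$.

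From the equation, $\partial_tf^n=g^n-v^n\partial_xf^n$. We estimate $v^n\partial_xf^n$ in $L^1(0,T;F^{s-1-M'}_{p,q})$ locally for some large $M'$, using the assumption $v\in L^\rho(F^{-M}_{\infty,\infty})$ and paraproduct bounds. This gives equicontinuity in time with values in a negative-index space. The locally compact embedding of Lemma \ref{lem:Triebel-Lizorkin-properties}(ii), together with Ascoli and a diagonal extraction, yields a limit $f$. Interpolating between the uniform $F^s_{p,q}$ bound and the negative-index convergence (Lemma \ref{lem:Triebel-Lizorkin-properties}(v)) shows that $f^n\to f$ in $C([0,T];F^{s'}_{p,1,\mathrm{loc}})$ for $s'<s$, which is enough to pass to the limit in $v^n\partial_xf^n$. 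Fatou's property (iv) places $f$ in $L^\infty(0,T;F^s_{p,q})$ and gives the estimates of Lemma \ref{lem:a-priori-estimates}. Continuity of $f$ with values in $F^{s'}_{p,1}$, $s'<s$, follows from the uniform bound, $\partial_tf\in L^1(F^{s-1}_{p,q})$ (or a lower index), interpolation, and the embedding $F^{s}_{p,q}\hookrightarrow F^{s'}_{p,1}$.

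\textbf{Uniqueness.} By linearity it suffices to treat $f_0=0$, $g=0$. If $s-1>-\min(\frac1p,1-\frac1p)$, apply Lemma \ref{lem:a-priori-estimates} at regularity $s-1$; the velocity hypothesis at level $s$ also controls $Z$ at level $s-1<1+\frac1p$. If $s-1$ falls below that threshold, apply the lemma at some $s''\in(-\min(\frac1p,1-\frac1p),s)$ instead. In both cases Gronwall gives $f\equiv0$.

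\textbf{Continuity when $q<\infty$.} This is the step I expect to be the main obstacle, since $L^\infty(0,T;F^s_{p,q})$ is not automatically continuous at top regularity. I would use a Bona–Smith type decomposition. Let $f^m$ solve (T) with data $S_mf_0$ and source $S_mg$; since $S_mf_0\in F^{s+1}_{p,q}$, the previous step shows $f^m\in C([0,T];F^s_{p,q})$, although the velocity hypothesis only covers level $s$, so this point needs checking. By Lemma \ref{lem:a-priori-estimates},
\[
\|f-f^m\|_{L^\infty_TF^s_{p,q}}\le e^{C\int_0^TZ}\Bigl(\|(\mathrm{Id}-S_m)f_0\|_{F^s_{p,q}}+\|(\mathrm{Id}-S_m)g\|_{L^1_TF^s_{p,q}}\Bigr).
\]
The right-hand side tends to $0$ as $m\to\infty$ precisely because $q<\infty$: dominated convergence, applied pointwise in $\ell^q$ and then in $L^p$, shows that high-frequency tails vanish in $F^s_{p,q}$. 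Hence $f$ is a uniform limit of continuous functions.

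The same splitting gives continuity of the data-to-solution map. Given $f_{0,k}\to f_0$, compare $f_k$ with $f_k^m$ and $f$ with $f^m$. The tails are uniformly small. The difference $f_k^m-f^m$ is controlled in $F^s_{p,q}$ by $\|S_m(f_{0,k}-f_0)\|_{F^s_{p,q}}$ using the linear estimate, since the velocity is the same. For $q=\infty$ one keeps only the $L^\infty(F^s_{p,q})$ bound, obtained from the linear estimate, together with convergence in $C([0,T];F^{s'}_{p,1})$, obtained by interpolation.
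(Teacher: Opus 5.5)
The paper does not prove this lemma; it quotes it from \cite{zhang2026commutatorestimatesapplicationstransporttype}, so your outline has to stand on its own. Existence by regularization, compactness and Fatou, uniqueness, and the $q=\infty$ statements are fine in outline. There are three small corrections.
\begin{enumerate}
\item An $L^1(0,T;X)$ bound on $\partial_tf^n$ does not by itself give equicontinuity. You need $\|\partial_tf^n(t)\|_X\le A(t)$ for one fixed $A\in L^1(0,T)$. You do get this from $\|S_nw\|\le C\|w\|$, $v\in L^\rho$ and $\partial_xv\in L^1(0,T;L^\infty)$. The last condition is also needed to control the high frequencies of $v$ in the product, since $v\in F^{-M}_{\infty,\infty}$ alone does not suffice.
\item Uniqueness follows directly from Lemma \ref{lem:a-priori-estimates} at level $s$; there is no need to drop to $s-1$.
\item Continuous dependence needs no splitting. $f_k-f$ solves (T) with data $f_{0,k}-f_0$, zero source and the same $v$, so $\|f_k-f\|_{L^\infty(0,T;F^s_{p,q})}\le e^{C\int_0^TZ(\tau)\,d\tau}\|f_{0,k}-f_0\|_{F^s_{p,q}}$ directly.
\end{enumerate}

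The genuine gap is the step you flagged yourself: $f\in C([0,T];F^s_{p,q})$ for $q<\infty$. Your bound on $f-f^m$ is correct, but it only helps once $f^m\in C([0,T];F^s_{p,q})$ is known, and you get that by running existence at a level $s+\delta$.
\begin{itemize}
\item For $s<1+\frac1p$ this is legitimate: take $s+\delta<1+\frac1p$, and the hypothesis on $v$ is unchanged.
\item For $s>1+\frac1p$, or $(s,p)=(2,1)$, it fails. This is exactly the case $F^2_{1,q}$ used in Section 3. Level $s+\delta$ requires $\partial_xv\in L^1(0,T;F^{s+\delta-1}_{p,q})$, which is not assumed. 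Smoothing the data does not make $f^m$ smoother than the flow of $v$ allows. Smoothing $v$ instead produces the forcing $(v-S_mv)\partial_xf$, which cannot be bounded in $F^s_{p,q}$ because $\partial_xf$ is only in $F^{s-1}_{p,q}$.
\end{itemize}

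\textbf{Fix for $1+\frac1p<s<2+\frac1p$ and $(s,p)=(2,1)$.} Set $h=\partial_xf\in L^\infty(0,T;F^{s-1}_{p,q})$. It solves $\partial_th+v\partial_xh=\partial_xg-\partial_xv\,h$. The right-hand side lies in $L^1(0,T;F^{s-1}_{p,q})$ because $F^{s-1}_{p,q}$ is an algebra. Since $s-1<1+\frac1p$ and $F^{s-1}_{p,q}\hookrightarrow F^{\frac1p}_{p,\infty}\cap L^\infty$, your low-regime argument plus uniqueness at level $s-1$ give $h\in C([0,T];F^{s-1}_{p,q})$. The lifting property then gives $f\in C([0,T];F^s_{p,q})$.

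\textbf{Fix for general $s$.} Show $\sup_{t\in[0,T]}\|(\mathrm{Id}-S_J)f(t)\|_{F^s_{p,q}}\to0$ by applying Lemma \ref{lem:a-priori-estimates} to $(\mathrm{Id}-S_J)f$, whose forcing is $(\mathrm{Id}-S_J)g-[v\partial_x,S_J]f$. This needs a commutator bound $\|[v\partial_x,S_J]f\|_{F^s_{p,q}}\le CZ\|f\|_{F^s_{p,q}}$ uniform in $J$. Such a bound comes from the proof of the a priori estimate, not from its statement. Given it, $[v\partial_x,S_J]f(t)\to0$ by density (using $q<\infty$), and dominated convergence applies. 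Since $S_Jf\in C([0,T];F^s_{p,q})$ follows from $f\in C([0,T];F^{s'}_{p,1})$, $f$ is then a uniform limit of continuous functions.

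Without one of these ingredients, the $q<\infty$ time continuity, and hence continuity of the solution map into $C([0,T];F^s_{p,q})$, remains unproved in the high-regularity regime.
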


	\section{Local well-posedness}
	This section is devoted to the local well-posedness results (Theorems \ref{thm:abstract-Local-well-posedness} and \ref{thm:CH-wellposedness}). As a first step, we prove the following lemma which will play a crucial role in the subsequent argument for continuous dependence.
	
	\begin{lemma}\label{lem:continuity-transport}
		Denote $\bar{\mathbb{N}}=\mathbb{N}\cup\{\infty\}$. Let $1\leq q<\infty$ and $(f^n)_{n\in\bar{\mathbb{N}}}$ be a sequence of functions belonging to $C([0,T];F^{1}_{1,q}(\mathbb{R}))$. Assume that $f^n$ is the solution to
		\begin{equation*}
			\begin{cases}
				\partial_tf^n+v^n\partial_xf^n=g,\\
				f^n|_{t=0}=f_0,
			\end{cases}
		\end{equation*}
		with $f_0\in F^{1}_{1,q}(\mathbb{R}),g\in L^1(0,T;F^{1}_{1,q}(\mathbb{R}))$ and that, for some $a\in L^1(0,T)$,
		\begin{equation*}
			\sup_{n\in\bar{\mathbb{N}}}\|\partial_xv^n(t)\|_{F^{1}_{1,q}}\leq a(t).
		\end{equation*}
		If in addition $v^n$ tends to $v^\infty$ in $L^1(0,T;F^1_{1,q}(\mathbb{R}))$, then $f^n$ tends to $f^\infty$ in $C([0,T];F^{1}_{1,q}(\mathbb{R}))$.
	\end{lemma}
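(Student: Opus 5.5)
This is the classical Danchin-type continuity argument for transport equations (cf. Bahouri--Chemin--Danchin, Lemma 3.34/Prop. 2.? on continuity with respect to the velocity), adapted to $F^1_{1,q}$. Note that here $s=1=\frac{1}{p}$ with $p=1$, so $s<1+\frac1p$. Lemma \ref{lem:a-priori-estimates} therefore applies with $Z(t)=\|\partial_x v(t)\|_{F^{1}_{1,\infty}\cap L^\infty}$. This is controlled by $\|\partial_x v(t)\|_{F^1_{1,q}}\le a(t)$, because $F^1_{1,q}\hookrightarrow L^\infty$ by Lemma \ref{lem:Triebel-Lizorkin-properties}(iii) and $F^1_{1,q}\hookrightarrow F^1_{1,\infty}$.

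\emph{Step 1: decomposition.} Fix a smooth approximation. Let $f_0^k=S_kf_0$ and $g^k=S_kg$, so that $f_0^k\to f_0$ in $F^1_{1,q}$ and $g^k\to g$ in $L^1(0,T;F^1_{1,q})$; here $q<\infty$ is used, via dominated convergence in the $L^1(l^q)$ norm. Let $f^n_k$ solve the transport equation with velocity $v^n$ and data $(f^k_0,g^k)$; these exist by Lemma \ref{lem:solve-transport}. Then
\[ f^n-f^\infty=(f^n-f^n_k)+(f^n_k-f^\infty_k)+(f^\infty_k-f^\infty). \]
\emph{Step 2: outer terms.} By linearity and Lemma \ref{lem:a-priori-estimates},
\[ \|f^n-f^n_k\|_{L^\infty_T F^1_{1,q}}\le e^{C\int_0^Ta}\big(\|f_0-f_0^k\|_{F^1_{1,q}}+\|g-g^k\|_{L^1_TF^1_{1,q}}\big), \]
uniformly in $n\in\bar{\mathbb N}$. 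This bound is small for $k$ large, and the same bound handles the third term.

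\emph{Step 3: middle term.} Write $w=f^n_k-f^\infty_k$. It solves
\[ \partial_t w+v^n\partial_x w=(v^\infty-v^n)\partial_x f^\infty_k, \qquad w(0)=0. \]
Apply Lemma \ref{lem:a-priori-estimates} again in $F^1_{1,q}$:
\[ \|w\|_{L^\infty_TF^1_{1,q}}\le e^{C\int_0^T a}\int_0^T\|(v^\infty-v^n)\partial_xf^\infty_k\|_{F^1_{1,q}}\,dt. \]
Since $f_0^k,g^k$ are smoothed, $f^\infty_k$ is more regular. Applying Lemma \ref{lem:a-priori-estimates} at level $s=2$ (the case $s=1+\frac1p$, $p=1$, where $Z=\|\partial_xv\|_{F^1_{1,q}}\le a$) gives $f^\infty_k\in L^\infty_T F^2_{1,q}$, with a norm bound $C_k$ that depends on $k$ (roughly $2^k$). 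Hence $\partial_x f^\infty_k\in L^\infty_TF^1_{1,q}$, and $F^1_{1,q}$ is an algebra by Lemma \ref{lem:Triebel-Lizorkin-properties}(iii). Therefore
\[ \int_0^T\|(v^\infty-v^n)\partial_xf^\infty_k\|_{F^1_{1,q}}\,dt\le C_k\|v^n-v^\infty\|_{L^1_TF^1_{1,q}}, \]
which tends to $0$ as $n\to\infty$ for fixed $k$. An $\varepsilon/3$ argument then gives convergence in $L^\infty_TF^1_{1,q}$. Continuity in time, and hence convergence in $C([0,T];F^1_{1,q})$, comes from Lemma \ref{lem:solve-transport}, since $q<\infty$.

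\emph{Main obstacle.} The delicate point is Step 3, namely making sure the regularity upgrade of $f^\infty_k$ to $F^2_{1,q}$ is legitimate. This requires $\partial_x v^\infty\in L^1(0,T;F^1_{1,q})$, which the hypothesis gives exactly. One should also check that $\partial_x f_0^k$ and $g^k$ lie in $F^2_{1,q}$ with $k$-dependent bounds; this holds because the $S_k$ are frequency-localized. If the algebra estimate is problematic, the fallback is the product estimate $\|uv\|_{F^1_{1,q}}\lesssim\|u\|_{F^1_{1,q}}\|v\|_{L^\infty}+\|u\|_{L^\infty}\|v\|_{F^1_{1,q}}$, together with $\partial_xf^\infty_k\in L^\infty$. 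A further point is that the transported $f^\infty_k$ need not remain frequency-localized; this is why only its $F^2_{1,q}$ bound is used, not its spectral support.
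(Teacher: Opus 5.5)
Your proposal is correct and follows essentially the same route as the paper. Both arguments smooth the data by $S_m$ and split $f^n-f^\infty$ into three pieces. The two outer pieces are controlled uniformly in $n$ by the $F^1_{1,q}$ a priori estimate. The middle piece is handled through the transport equation with source $(v^\infty-v^n)\partial_x f^\infty_m$, using the $F^2_{1,q}$ bound from the case $s=2$, $p=1$ of Lemma \ref{lem:a-priori-estimates} together with the algebra property of $F^1_{1,q}$.

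The differences are organizational. The paper first proves the smooth-data case as a standalone estimate, whereas you insert that argument directly into the middle term. You also make explicit, via $F^1_{1,q}\hookrightarrow L^\infty\cap F^1_{1,\infty}$, that $Z(t)\lesssim a(t)$ at level $s=1$, a check the paper leaves implicit.
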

	\begin{proof}
		Let $w^n\triangleq f^n-f^\infty$. We have
		\begin{equation*}
			\partial_tw^n+v^n\partial_xw^n=(v^\infty-v^n)\partial_xf^\infty,\qquad w^n|_{t=0}=0.
		\end{equation*}
		Let us firstly make the additional assumption that $f_0\in F^2_{1,q}$ and $g\in L^1(0,T;F^2_{1,q})$. In this particular case, Lemmas \ref{lem:a-priori-estimates} and \ref{lem:solve-transport} insures that $f^n\in C([0,T];F^{2}_{1,q})$ and
		\begin{equation}\label{eq:equation8}
			\|f^n(t)\|_{F^2_{1,q}}\leq e^{C\int_{0}^ta(\tau)d\tau}\|f_0\|_{F^2_{1,q}}+\int^t_0 e^{C\int_{\tau}^{t}a(\tau')d\tau'}\|g(\tau)\|_{F^2_{1,q}}d\tau.
		\end{equation}
		On the other hand, we also have
		\begin{equation*}
			\|w^n(t)\|_{F^1_{1,q}}\leq \int^t_0 e^{C\int_{\tau}^{t}\|\partial_xv^n(\tau')\|_{F^1_{1,q}}d\tau'}\|(v^\infty-v^n)(\tau)\partial_xf^\infty(\tau)\|_{F^1_{1,q}}d\tau.
		\end{equation*}
		Since $F^1_{1,q}(\mathbb{R})$ is an algebra, it follows from \eqref{eq:equation8} that
		\begin{equation}\label{eq:equation9}
			\|w^n(t)\|_{F^1_{1,q}}\leq Ce^{C\int^t_0a(\tau)d\tau}\bigg(\|f_0\|_{F^2_{1,q}}+\int_{0}^t\|g(\tau)\|_{F^2_{1,q}}d\tau\bigg) \int^t_0 \|(v^\infty-v^n)(\tau)\|_{F^1_{1,q}}d\tau,
		\end{equation}
		which yield the desired result of convergence.
		
		Next, we assume that $f_0\in F^1_{1,q}$ and $g\in L^1(0,T;F^1_{1,q})$.  For all $n\in\bar{\mathbb{N}}$ and $m\in\mathbb{N}$, we introduce the solution $f^n_m$ to
		\begin{equation*}
			\begin{cases}
				\partial_tf_m^n+v^n\partial_xf_m^n=S_mg,\\
				f^n_m|_{t=0}=S_mf_0,
			\end{cases}
		\end{equation*}
		where $S_m$ is defined in \eqref{def:S_j}. Then, we have
		\begin{equation}\label{eq:equation10}
			\|w^n(t)\|_{F^1_{1,q}}\leq \|f^n-f^n_m\|_{F^1_{1,q}}+\|f^n_m-f^\infty_m\|_{F^1_{1,q}}+\|f^\infty_m-f^\infty\|_{F^1_{1,q}}.
		\end{equation}
		Obviously, $S_mf_0\in F^2_{1,q}$ and $S_mg\in L^1(0,T;F^2_{1,q})$.
		Hence, according to \eqref{eq:equation9}, we have
		\begin{equation}\label{eq:equation11}
			\|f^n_m(t)-f^\infty_m(t)\|_{F^1_{1,q}}\leq Ce^{C\int^t_0a(\tau)d\tau}\bigg(\|S_mf_0\|_{F^2_{1,q}}+\int_{0}^t\|S_mg(\tau)\|_{F^2_{1,q}}d\tau\bigg) \int^t_0 \|(v^\infty-v^n)(\tau)\|_{F^1_{1,q}}d\tau.
		\end{equation}
		On the other hand, for any $m\in\mathbb{N}$ and $k\in\bar{\mathbb{N}}$, $f^k-f^\infty_m$ solves
		\begin{equation*}
			\begin{cases}
				\partial_tu^k+v^k\partial_xu^k=g-S_mg,\\
				u^k|_{t=0}=f_0-S_mf_0.
			\end{cases}
		\end{equation*}
		Thanks to Lemma \ref{lem:a-priori-estimates} again, we obtain
		\begin{equation}
			\|f^k-f^\infty_m\|_{F^1_{1,q}}\leq e^{C\int_{0}^ta(\tau)d\tau}\bigg(\|f_0-S_mf_0\|_{F^1_{1,q}}+\int_{0}^t\|g-S_mg\|_{F^1_{1,q}}d\tau\bigg),
		\end{equation}
		which along with \eqref{eq:equation10} and \eqref{eq:equation11} yield that for all $t\in[0,T]$,
		\begin{align}
			\|w^n\|_{F^1_{1,q}}\leq& Ce^{\int_{0}^ta(\tau)d\tau}\Bigg(\|f_0-S_mf_0\|_{F^1_{1,q}}+\int_{0}^T\|g-S_mg\|_{F^1_{1,q}}d\tau\nonumber\\
			&+\bigg(\|S_mf_0\|_{F^2_{1,q}}+\int_{0}^T\|S_mg(\tau)\|_{F^2_{1,q}}d\tau\bigg) \int^T_0 \|(v^\infty-v^n)(\tau)\|_{F^1_{1,q}}d\tau\Bigg).
		\end{align}
		Note that if $q<\infty$, then for any $F^s_{p,q}$, we have $\lim\limits_{m\to\infty}\|f-S_mf\|_{F^s_{p,q}}=0.$ Hence, the Lebesgue dominated convergence theorem ensures that the first two terms of the right side can be small arbitrarily when $m$ large enough. Subsequently, if we fix some big enough $m$, then the last term tends to zero  as $n\to\infty$. Thus, we have $w^n$ tends to $0$ in $C([0,T];F^1_{1,q})$. Therefore, we have complete the proof of Lemma \ref{lem:continuity-transport}.
	\end{proof}

	\begin{proof}[Proof of Theorem \ref{thm:abstract-Local-well-posedness}]
		For the sake of convenience, we set $A(u)=u$ in the following proof.
		
		\textbf{Existence}: 
		Staring form $u^0\triangleq 0$, we then define by induction a sequence of smooth functions $(u^n)_{n\in\mathbb{N}}$ by solving the following transport equations:
		\begin{equation}\label{approximate-solution-system}
			\begin{cases}
				\partial_tu^{n+1}+u^n\partial_xu^{n+1}=F(u^n),\\
				u^{n+1}(0,x)=u_0(x).
			\end{cases}
		\end{equation}
		Using Lemma \ref{lem:solve-transport} together with \eqref{eq:existence}, we obtain a unique solution $u^{n+1}$ of \eqref{approximate-solution-system} in the space $E_q(T)$. Furthermore, applying Lemma \ref{lem:a-priori-estimates}  and \eqref{eq:existence} yields 
		\begin{equation}\label{eq:equation1}
			\|u^{n+1}(t)\|_{F^{2}_{1,q}}\leq Ce^{C\int_{0}^t\| u^n(\tau)\|_{F^2_{1,q}}d\tau}\Big(\|u_0\|_{F^{2}_{1,q}}+\int_{0}^te^{-C\int_{0}^\tau\| u^n(\tau')\|_{F^2_{1,q}}d\tau'}\big(\|u^n\|^{k+1}_{F^2_{1,q}}+1\big)d\tau\Big).
		\end{equation}
		Let us fix a $T>0$ such that $2C^2\big(\|u_0\|^{k+1}_{F^2_{1,q}}+1\big)T<1$ and suppose that
		\begin{equation}\label{eq:equation2}
			\|u^n(t)\|_{F^2_{1,q}}\leq C(\|u_0\|_{F^2_{1,q}}+1),\qquad\forall t\in[0,T],\ \forall n\in\mathbb{N}.
		\end{equation} 
		Substituting \eqref{eq:equation2} in \eqref{eq:equation1} leads to
		\begin{align*}
			\quad\|u^{n+1}(t)\|_{F^2_{1,q}}
			&\leq C\Big(\|u_0\|_{F^2_{1,q}}+\frac{C^{k+1}(\|u_0\|_{F^2_{1,q}}+1)^{k+1}+1}{2C^2(\|u_0\|^{k+1}_{F^2_{1,q}}+1)}\nonumber\Big)\leq  C(\|u_0\|_{F^2_{1,q}}+1).
		\end{align*}		
		Therefore, we obtain that the sequence $(u^n)_{n\in\mathbb{N}}$ is uniformly bounded in the space $L^\infty(0,T;F^2_{1,q})$. This clearly entails that $u^n\partial_xu^n$ is uniformly bounded in $L^\infty(0,T;F^{1}_{1,q})$, moreover, by \eqref{eq:existence}, the right-hand side of \eqref{approximate-solution-system} have been shown to be uniformly bounded in $C([0,T];F^2_{1,q})$. We can conclude that  $(u^n)_{n\in\mathbb{N}}$ is uniformly bounded in $E_{q}(T)$.

		As the embedding $F^2_{1,q} \hookrightarrow F^{1}_{1,q}$ is locally compact (by Lemma \ref{lem:Triebel-Lizorkin-properties} $(ii)$), the Arzela-Ascoli theorem and Cantor's diagonal process, we infer that, up to an extraction, $(u^n)_{n\in\mathbb{N}}$ tends to a limit $u\in \textup{Lip}([0,T];(F^{1}_{1,q})_{loc})$, i.e., for any $\phi\in C^\infty_c(\mathbb{R})$, $\phi u^n$ tends to $\phi u$ in $C([0,T];F^1_{1,q})$. Besides, by using the uniform bounds of $u^n$ and Fatou properties (Lemam \ref{lem:Triebel-Lizorkin-properties} $iv$), we gather that $u\in L^\infty(0,T;F^2_{1,q})$. By interpolation (Lemma \ref{lem:Triebel-Lizorkin-properties} $(v)$), one can deduce that $\phi u^n$ tends to $\phi u$ in $C([0,T];F^{s'}_{1,q})$ for any $s'<2$ and $\phi \in C^\infty_c$. Since $F(u)$ is a \textquoteleft good operator\textquoteright, it is a routine process to prove that  $u$ is indeed a solution to equation \eqref{general-abstract-equation}. Thanks to $u\in L^\infty(0,T;F^2_{1,q})$, the right-hand side of equation \eqref{general-abstract-equation} and Lemma \ref{lem:solve-transport}, we have $u\in C([0,T];F^2_{1,q})$. Using equations \eqref{general-abstract-equation} itself again, we can easily infer that $\partial_tu\in C([0,T];F^{1}_{1,q})$. Therefore, the obtained solution $u\in E_q(T)$.\\
		
		\textbf{Uniqueness:} 
		Since $u\in(C[0,T];F^2_{1,q}(\mathbb{R}))\hookrightarrow C([0,T];W^{1,\infty}(\mathbb{R}))$, the equation \eqref{ODE} has a unique solution $X(t,\alpha)\in C^1([0,T];C^0)$. Set $U(t,\alpha)=u(t,X(t,\alpha))$, then $U(t,\alpha)$ is bounded in $L^\infty(0,T;W^{1,\infty})$ and $U_{\alpha}(t,\alpha)=u_x(t,X(t,\alpha))X_\alpha(t,\alpha)$. In view of  \eqref{general-abstract-equation}, we obtain
		\begin{equation}\label{eq:equation12}
			X(t,\alpha)=\alpha+\int^t_0Ud\tau,\qquad\qquad\qquad X_{\alpha}(t,\alpha)=1+\int^t_0U_\alpha d\tau,
		\end{equation}
		\begin{equation}\label{eq:equation13}
			U_t(t,\alpha)=\widetilde{G}(U,X),\qquad\qquad\qquad U_{t\alpha}(t,\alpha)=\big(\widetilde{G}(U,X)\big)_{\alpha}.
		\end{equation}
		From \eqref{eq:equation12} we infer that $\frac{1}{2} \leq X_{\alpha} \leq C_{u_{0}}$ for $T>0$ sufficiently small. 
		Consequently, the flow map $X(t,\alpha)$ is one-order diffeomorphism. 
		One can easily get $U(t,\alpha) \in L^{\infty}(0,T; W^{1,1})$. Indeed,
		\begin{align*}
			\|U\|_{L^{1}}
			&= \int_{-\infty}^{+\infty} |U(t,\alpha)| d\alpha
			= \int_{-\infty}^{+\infty} |u(t, X(t,\alpha))| \frac{1}{X_{\alpha}} dX
			\leq 2 \|u\|_{L^{1}} \leq C,\\
			\|U_{\alpha}\|_{L^{1}}
			&= \int_{-\infty}^{+\infty} |U_{\alpha}(t,\alpha)| d\alpha
			= \int_{-\infty}^{+\infty} |u_{x}(t, X(t,\alpha))| dX
			\leq \|u_{x}\|_{L^{1}} \leq C.
		\end{align*}
		‌Thus, we obtain $U(t,\alpha) \in L^{\infty}(0,T; W^{1,1}\cap W^{1,\infty}),X(t,\alpha)-\alpha \in L^{\infty}(0,T; W^{1,1} \cap W^{1,\infty})$
		and $\frac{1}{2} \le X_{\alpha}(t,\alpha) \le C_{u_{0}}$ for every $t \in [0,T]$.
		
		We now establish uniqueness. Suppose $u_1$ and $u_2$ are two solutions of \eqref{general-abstract-equation}, then $U_i(t,\alpha)\triangleq u_i(t, X_i(t,\alpha))$ satisfies \eqref{eq:equation13} for $i = 1,2$. 
		Combining hypothesis \eqref{eq:uniqueness} with \eqref{eq:equation13} 
		and applying Gronwall's inequality, one can deduce
		\begin{align*}
			&\quad\|U_1-U_2\|_{W^{1,\infty} \cap W^{1,1}}+\|X_1-X_2\|_{W^{1,\infty} \cap W^{1,1}} \\
			&\leq C\big(\|U_1(0) - U_2(0)\|_{W^{1,\infty}\cap W^{1,1}}+\|X_1(0)- X_2(0)\|_{W^{1,\infty} \cap W^{1,1}}\big) \\
			&\quad + C \int_{0}^{T} \Big(
			\| \widetilde{G}(U_1, X_1)-\widetilde{G}(U_2, X_2) \|_{W^{1,\infty} \cap W^{1,1}}+\|U_{1} - U_{2}\|_{W^{1,\infty}\cap W^{1,1}} \Big)dt \\
			&\leq C \|U_1(0) - U_2(0)\|_{W^{1,\infty} \cap W^{1,1}}+C \int_{0}^{T} \Big( \|U_1-U_2\|_{W^{1,\infty} \cap W^{1,1}}+\|X_1- X_2\|_{W^{1,\infty} \cap W^{1,1}} 
			\Big) \, dt \\[4pt]
			&\leq C \|u_{1}(0) - u_{2}(0)\|_{F^{2}_{1,q}}.
		\end{align*}
		It follows that
		\begin{align*}
			\|u_1-u_2\|_{L^1}
			&\leq C \|u_1(t,X_1(t,\alpha))-u_2(t,X_1(t,\alpha))\|_{L^1} \\
			&\leq C \| 
			(u_1(t,X_1(t,\alpha))-u_2(t,X_2(t,\alpha))+u_2(t,X_2(t,\alpha))-u_2(t,X_1(t,\alpha))\|_{L^1} \\
			&\leq C \big(\|U_1-U_2\|_{L^1}+\|\partial_xu_2\|_{L^{\infty}}\|X_1-X_2\|_{L^1} \big)\\
			&\leq C \|u_{1}(0) - u_{2}(0)\|_{F^{2}_{1,q}}.
		\end{align*}
		By the embedding $L^1(\mathbb{R}) \hookrightarrow F^{-\frac{1}{2}}_{1,q}(\mathbb{R})$, one gets
		\[
		\|u_1- u_2\|_{F^{-\frac{1}{2}}_{1,q}}\leq C\|u_1-u_2\|_{L^1}\leq C \|u_1(0)- u_2(0)\|_{F^{2}_{1,q}}.
		\]
		Thus, if $u_{1}(0) = u_{2}(0)$, uniqueness follows immediately.\\
		
		\textbf{Continuous dependence:} Assume that $u^{n}_{0} \to u^{\infty}_{0}$ in $F^{2}_{1,q}$, and let $u^n$, $u^{\infty}$ be the corresponding solutions of \eqref{general-abstract-equation} with a common lifespan $T$. According to the previous discussion, we see that $u^{n}$ and $u^{\infty}$ are uniformly bounded in $L^{\infty}(0,T; F^{2}_{1,q})$, and
		\begin{equation}\label{eq:equation14}
			\| u^{n}-u^{\infty}\|_{F^{-\frac{1}{2}}_{1,q}}
			\leq C \|u^{n}_{0}-u^{\infty}_{0}\|_{F^{2}_{1,q}},
			\qquad \forall\, t \in [0,T].
		\end{equation}
		By the interpolation inequality, we infer that $u^{n} \to u^{\infty}$ in $C([0,T]; F^{1}_{1,q})$. 
		
		In order to prove $u^{n} \to u^{\infty}$ in $C([0,T]; F^{2}_{1,q})$, it suffices to show that $u^n_x \to u^\infty_x$ in $C([0,T]; F^{1}_{1,q})$. 
		For brevity, let $v^n\triangleq u^n_x$ and $v^\infty\triangleq u^{\infty}_{x}$. We decompose $v^{n}=w^{n}+z^{n}$ with $(w^{n}, z^{n})$ satisfying
		\[
		\begin{cases}
			\partial_{t} w^{n} + u^{n} \partial_{x} w^{n}
			= \partial_{x}(G(u^{\infty})) - (u^{\infty}_{x})^{2}, \\
			w^{n}(0,x) = v^{n}_{0} = \partial_{x} u^{n}_{0},
		\end{cases}
		\]
		and
		\[
		\begin{cases}
			\partial_{t} z^{n} + u^{n} \partial_{x} z^{n}
			= \partial_{x}(G(u^{n}) - G(u^{\infty}))
			-((u^{n}_{x})^{2} - (u^{\infty}_{x})^{2}), \\
			z^{n}(0,x) = v^{n}_{0}-v^{\infty}_{0}
			= \partial_{x} u^{n}_{0}-\partial_{x} u^{\infty}_{0}.
		\end{cases}
		\]
	   Then \eqref{eq:existence} together with Lemma \ref{lem:continuity-transport} guarantees that
		\begin{equation}\label{eq:equation15}
			w^{n} \to w^{\infty}\quad\text{in}\quad C([0,T]; F^{1}_{1,q}).
		\end{equation}
		By virtue of \eqref{eq:continuous} and Lemma \ref{lem:Triebel-Lizorkin-properties} $(vi)$, we have
		\begin{align*}
			\| \partial_x(G(u^{n})-G(u^{\infty})) \|_{F^{1}_{1,q}}
			\leq C \|u^n-u^\infty\|_{F^{2}_{1,q}} \leq C\big(
			\|u^{n} - u^{\infty}\|_{F^{1}_{1,q}}+\|u^n_x- u^{\infty}_{x}\|_{F^{1}_{1,q}}\big),
		\end{align*}
		and
		\begin{align*}
			\| (u^{n}_{x} + u^{\infty}_{x})(u^{n}_{x} - u^{\infty}_{x}) \|_{F^{1}_{1,q}}
			\leq C \|u^{n}_{x} - u^{\infty}_{x}\|_{F^{1}_{1,q}}.
		\end{align*}
		From Lemma \ref{lem:a-priori-estimates} and $u^n$ is uniformly bounded in $L^\infty(0,T;F^2_{1,q})$ it follows that, for all $n \in \mathbb{N}$,
		\begin{align}
			\|z^{n}(t)\|_{F^{1}_{1,q}}
			&\leq C \Big(
			\|v^{n}_{0} - v^{\infty}_{0}\|_{F^{1}_{1,q}}+ \int_{0}^{t}
			\|u^{n}-u^{\infty}\|_{F^{1}_{1,q}}+\|u^{n}_{x}-u^{\infty}_{x}\|_{F^{1}_{1,q}} d\tau\Big) \nonumber\\
			&\leq C \Big(
			\|v^{n}_{0}-v^{\infty}_{0}\|_{F^{1}_{1,q}}+\int_{0}^{t}\|u^{n}-u^{\infty}\|_{F^{1}_{1,q}}+\|w^{n}-w^{\infty}\|_{F^{1}_{1,q}}+\|z^{n}\|_{F^{1}_{1,q}}d\tau\Big).\label{eq:equation16}
		\end{align}
		Now we invoke the following facts:
		\begin{itemize}
			\item $v^{n}_{0} \to v^{\infty}_{0}$ in $F^{1}_{1,q}$,
			\item $u^{n} \to u^{\infty}$ in $C([0,T]; F^{1}_{1,q})$,
			\item $w^{n} \to w^{\infty}$ in $C([0,T]; F^{1}_{1,q})$.
		\end{itemize}
		Applying Gronwall's inequality to \eqref{eq:equation16}, one infers that
		$z^{n}$ tends to $0$ in $C([0,T]; F^{1}_{1,q})$. In view of Lemmas \ref{lem:a-priori-estimates}-\ref{lem:solve-transport}, we also have $z^{\infty} = 0$ in $C([0,T]; F^{1}_{1,q})$. Therefore,
		\begin{align*}
			\|v^n-v^\infty\|_{L^{\infty}(0,T; F^{1}_{1,q})}
			&\leq \|w^n-w^\infty\|_{L^{\infty}(0,T; F^{1}_{1,q})}+\|z^n-z^\infty\|_{L^{\infty}(0,T;F^{1}_{1,q})} \\
			&\leq \|w^n-w^\infty\|_{L^{\infty}(0,T; F^{1}_{1,q})}+\|z^{n}\|_{L^{\infty}(0,T; F^{1}_{1,q})}\longrightarrow 0 \quad\text{as } n \to \infty,
		\end{align*}
		that is,
		\[
		u^n_x \to u^\infty_x\quad\text{in}\quad C([0,T]; F^{1}_{1,q}).
		\]
		Thus we have proved the continuous dependence of the solution map for \eqref{general-abstract-equation} in $C([0,T]; F^{2}_{1,q})$ with $1 \leq q < \infty$. Therefore, we complete the proof of Theorem \ref{thm:abstract-Local-well-posedness}
	\end{proof}

	\begin{proof}[Proof of Theorem \ref{thm:CH-wellposedness}]
		Let $A(u)=u$ and $G(u)=-\partial_x(1-\partial^2_x)^{-1}\big(u^2+\frac{1}{2}(\partial_xu)^2\big)$. Then \eqref{general-abstract-equation} becomes the Camassa-Holm equation \eqref{CH-transport-equation}. Using approximation argument, it is easy to see that $G(u)$ is a \textquoteleft good operator\textquoteright. Hence, by Theorem \ref{thm:abstract-Local-well-posedness}, it suffices to verify conditions \eqref{eq:existence}-\eqref{eq:continuous}. The proof of \eqref{eq:uniqueness} (which is independent of the working space $F^s_{p,q}$) can be found in \cite{MR4595933}, we only need to prove \eqref{eq:existence} and \eqref{eq:continuous}. Indeed,
		\begin{equation*}
			\|G(u)\|_{F^2_{1,q}}=\|\partial_x(1-\partial^2_x)^{-1}(u^2+\frac{1}{2}u_x^2)\|_{F^2_{1,q}}\leq C(\|u\|^2_{F^2_{1,q}}+1),
		\end{equation*}
		\begin{equation*}
			\|G(u_1)-G(u_2)\|_{F^2_{1,q}}\leq C(\|u_1\|_{F^2_{1,q}}+\|u_2\|_{F^2_{1,q}})(\|u_1-u_2\|_{F^2_{1,q}})\leq  C(\|u_1-u_2\|_{F^2_{1,q}}).
		\end{equation*}
		Therefore, we complete the proof of Theorem \ref{thm:CH-wellposedness}.
	\end{proof}
	
	\section{Ill-posedness in the sense of norm inflation}
	The aim of this section is to show the ill-posedness in $F^{1+\frac{1}{p}}_{p,q}(\mathbb{R})$ for $1<p<\infty$ and $1\leq q \leq \infty$.  To this end, we first introduce the atomic decomposition in Triebel-Lizorkin spaces. For $\nu\in\mathbb{N}$ and $m\in\mathbb{Z}$, let $Q_{\nu m}$ be the closed interval centred at $2^{-\nu}m$ with length $2^{-\nu+1}$ (i.e., $Q_{\nu m}=[2^{-\nu}m-2^{-\nu},2^{-\nu}m+2^{-\nu}]$). If $Q$ is an interval and $r>0$, then $rQ$ denotes the interval concentric with $Q$ whose length is $r$ times the length of $Q$.
	
	\begin{definition}\label{def:atom}
		\textup{(\cite{MR2250142})}
		\begin{enumerate}[label={(\roman*)}]
			\item Let $K\in\mathbb{N}$ and $c\geq 1$. A continuous function $a: \mathbb{R} \to \mathbb{C}$ for which there exist all derivatives $D^\alpha a$ if $|\alpha|\leq K$ is called a $1$-atom (more precisely $1_K$-atom) if 
			\[ \text{supp}\, a \subset c\,Q_{0m}\qquad \text{for some }\quad m\in\mathbb{Z}\]
			and 
			\[ |D^\alpha a(x)|\leq 1\qquad\text{for}\qquad|\alpha|\leq K.  \]
			\item Let $s\in\mathbb{R},1\leq p\leq \infty,K\in\mathbb{N},L\geq 0$ and $c\geq 1$. A continuous function $a: \mathbb{R} \to \mathbb{C}$ for which there exist all derivatives $D^\alpha a$ if $|\alpha|\leq K$ is called a $(s,p)$-atom (more precisely $(s,p)_{K,L}$-atom) if
			\[ \text{supp}\, a \subset c\,Q_{\nu m}\qquad \text{for some }\quad\nu\in\mathbb{N}^+,\quad m\in\mathbb{Z},\]
			\[ |D^\alpha a(x)|\leq 2^{-\nu(s-\frac{1}{p})+|\alpha|\nu}\qquad\text{for}\qquad|\alpha|\leq K, \]
			and 
			\[ \int_{\mathbb{R}}x^\beta a(x)dx=0\qquad\text{for}\qquad |\beta|<L. \]
		\end{enumerate}
	\end{definition}
	\begin{definition}\label{def:f}
		\textup{(\cite{MR2250142})} Let $1\leq p< \infty,1\leq q\leq \infty$. Set $\lambda\triangleq\{\lambda_{\nu m}\in\mathbb{C}:\nu\in\mathbb{N},m\in\mathbb{Z}\}$,
		and 
		\[ \|\lambda\|_{f_{pq}}\triangleq\Big\|\big(\sum_{\nu=0}^{\infty}\sum_{m\in \mathbb{Z}}|\lambda_{\nu m}\chi^{(p)}_{\nu m}(\cdot)|^q\big)^{\frac{1}{q}}\Big\|_{L^p(\mathbb{R})}, \]
		where 
		\begin{equation}\label{eq:equation19}
			\chi^{(p)}_{\nu m}(x)=
			\begin{cases}
				2^{(\nu-1)/p},\quad&\text{if}\quad x\in Q_{\nu m},\\
				0,\quad&\text{if}\quad x\notin Q_{\nu m}
			\end{cases}
		\end{equation}
		with the usual modification if $q=\infty$. Moreover, we define $f_{pq}\triangleq\{\lambda:\|\lambda\|_{f_{pq}}<\infty\}.  $
	\end{definition}
	
	\begin{lemma}\label{lem:F-atoms}
		\textup{(\cite{MR2250142})} Let $1\leq p<\infty,1\leq q\leq \infty,s\in\mathbb{R}$. Let $ K\in \mathbb{N},L\geq 0$ with 
		\begin{equation}\label{eq:equation17}
			K>s\qquad\text{and}\qquad L>-s
		\end{equation}
		be fixed. Then $f\in F^s_{p,q}(\mathbb{R})$ if and only if it can be represented as 
		\begin{equation}\label{eq:equation18}
			f=\sum_{\nu=0}^\infty\sum_{m\in \mathbb{Z}}\lambda_{\nu m}a_{\nu m},\qquad \text{in } \mathscr{S}'(\mathbb{R}),
		\end{equation}
		where for fixed $c\geq 1$, $a_{\nu m}$ are $1_K$-atoms ($\nu=0$) or $(s,p)_{K,L}$-atoms ($\nu\in\mathbb{N}^+$) with respect to \eqref{eq:equation17} and $\lambda\in f_{pq}$, Furthermore,
		\[ \|f\|_{F^s_{p,q}(\mathbb{R})}\sim\inf\|\lambda\|_{f_{pq}} \]
		are equivalent norms where the infimum is taken over all admissible representations \eqref{eq:equation18}.
	\end{lemma}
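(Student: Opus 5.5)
Since the lemma is quoted from \cite{MR2250142}, I would only reconstruct the standard Frazier--Jawerth type argument in two halves. Throughout I write $M$ for the Hardy--Littlewood maximal operator and $M_r g\triangleq (M|g|^r)^{1/r}$. The one tool used in both directions is the Fefferman--Stein vector-valued maximal inequality
\[
\big\|\|M_r g_j\|_{l^q}\big\|_{L^p}\le C\big\|\|g_j\|_{l^q}\big\|_{L^p},\qquad 0<r<\min(p,q).
\]
For $p=1$ it forces $r<1$. This is why I would never apply $M$ directly to $|\lambda\chi|$ but always to $|\lambda\chi|^r$.

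\textbf{Sufficiency (atoms $\Rightarrow$ $F^s_{p,q}$).} Fix $f=\sum_{\nu,m}\lambda_{\nu m}a_{\nu m}$. First I estimate $\Delta_j a_{\nu m}$ pointwise, in two regimes.
\begin{itemize}
\item If $j\le\nu$, I write $\Delta_j a=2^j\int h(2^j(x-y))a(y)\,dy$ and Taylor-expand $h(2^j(x-\cdot))$ at $2^{-\nu}m$ up to order $L$. The vanishing moments $\int x^\beta a=0$ for $|\beta|<L$ kill the polynomial part, leaving a gain $2^{-(\nu-j)L}$.
\item If $j>\nu$, I use that $h$ has all moments vanishing, since $\varphi$ vanishes near $0$. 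Expanding $a$ to order $K$ and using $|D^\alpha a|\le 2^{-\nu(s-1/p)+|\alpha|\nu}$ gives a gain $2^{-(j-\nu)K}$.
\end{itemize}
Together this yields, for every $N$,
\[
2^{js}|\Delta_j a_{\nu m}(x)|\lesssim 2^{-|j-\nu|\delta}\,2^{\nu/p}\big(1+2^{\min(j,\nu)}|x-2^{-\nu}m|\big)^{-N},
\]
with some $\delta>0$. Here \eqref{eq:equation17}, i.e.\ $K>s$ and $L>-s$, is used exactly to make $\delta>0$; if needed I take $r$ close to $0$ to absorb a loss of $1/r-1$.

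Next I sum over $m$ at fixed $\nu$. The decaying factor is dominated by $M_r$ applied to $\sum_m|\lambda_{\nu m}\chi^{(p)}_{\nu m}|$, up to a factor $2^{(\nu-j)_+/r}$ that is absorbed by choosing $L$ large relative to $1/r$. Then I apply discrete Young's inequality in $\nu\mapsto j$ and finally Fefferman--Stein. This gives $\|f\|_{F^s_{p,q}}\lesssim\|\lambda\|_{f_{pq}}$. Convergence in $\mathscr S'$ follows from the same bounds.

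\textbf{Necessity ($F^s_{p,q}$ $\Rightarrow$ atoms).} I need a Calderón reproducing formula whose synthesis kernels have \emph{compact spatial support}:
\[
f=\sum_{j\ge0}\psi_j*(\phi_j*f),
\]
where $\psi_0,\psi$ are smooth, compactly supported, and $\psi$ has $L$ vanishing moments. To build it, take $k=\partial^{L'}k_0$ with $k_0\in C^\infty_c$ and $\hat k_0(0)\ne0$. Then $\phi_j$ are the Schwartz functions defined by dividing by $\sum_j|\hat k(2^{-j}\xi)|^2$, which is bounded below. Localize by $\mathbb 1_{Q_{jm}}$ and set
\[
\lambda_{jm}=C\,2^{-j/p}\,2^{js}\,2^{j/p}\sup_{y\in Q_{jm}}|\phi_j*f(y)|,\qquad a_{jm}=\lambda_{jm}^{-1}\,\psi_j*\big(\mathbb 1_{Q_{jm}}\,\phi_j*f\big).
\]
These are atoms, with supports in $cQ_{jm}$. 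The derivative bounds come from $\|\partial^\alpha\psi_j\|_{L^1}\lesssim2^{j|\alpha|}$, and the moments are inherited from $\psi$.

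\textbf{Main obstacle.} The crux is bounding $\|\lambda\|_{f_{pq}}$ by $\|f\|_{F^s_{p,q}}$. This needs the Peetre maximal function estimate
\[
\sup_{|y-x|\le 2^{1-j}}|\phi_j*f(y)|\lesssim M_r(\phi_j*f)(x).
\]
It rests on the band-limited (or rapidly decaying) structure of $\phi_j*f$ via a Plancherel--Pólya type argument, and is where I expect the technical work to lie. Once it is available, Fefferman--Stein closes the estimate. Finally, $\|\phi_j*f\|$ and $\|\Delta_jf\|$ yield equivalent $F$-norms by the independence of $F^s_{p,q}$ from the choice of Littlewood--Paley resolution. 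Combining both directions gives the norm equivalence.
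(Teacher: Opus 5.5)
\textbf{Verdict: genuine gap.} The paper does not prove this lemma; it quotes it from \cite{MR2250142}. Your outline therefore has to stand on its own as the standard Frazier--Jawerth argument. Its architecture is right, and the sufficiency half works after the bookkeeping fix noted in the second paragraph. The necessity half, however, has a genuine gap at exactly the step you call the crux.

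\textbf{The gap in necessity.} You take $\widehat{\phi_j}=\overline{\widehat{\psi}(2^{-j}\cdot)}/\Theta$ with $\psi$ compactly supported. Then $\widehat{\psi}$ is entire, so $\widehat{\phi_j}$ vanishes only on a discrete set and $\phi_j*f$ is \emph{not} band-limited. For such functions the single-scale inequality $\sup_{|y-x|\le 2^{1-j}}|\phi_j*f(y)|\lesssim M_r(\phi_j*f)(x)$ is false. Take $k\gg j$ and $0\le\widehat{\beta}\in C^\infty_c$ nonzero, supported where $\widehat{\phi_j}(2^k\cdot)\neq0$, and let $f\in\mathscr{S}$ with $\widehat f=\widehat\beta(2^{-k}\cdot)/\widehat{\phi_j}$. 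Then $\phi_j*f=2^k\beta(2^k\cdot)$. At $|x|=2^{-j}$ the left side is at least $2^k\beta(0)>0$, while $M_r(\phi_j*f)(x)\lesssim 2^{k}2^{-(k-j)/r}$. For the same reason, appealing to ``independence of the Littlewood--Paley resolution'' does not cover your $\phi_j$: they are neither compactly Fourier supported nor dilates of one function. The bound $\|\lambda\|_{f_{pq}}\lesssim\|f\|_{F^s_{p,q}}$ is still true, but you must change either the kernels or the lemma.

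The cleanest repair is to do the division the other way round.
\begin{itemize}
\item Keep the paper's partition $\chi+\sum_j\varphi(2^{-j}\cdot)=1$.
\item Choose $\psi_0,\psi\in C_c^\infty$ with small support, $\psi=\partial^{L'}k_0$, and $k_0$ a narrow mollifier, so that $\widehat{\psi_0}\neq0$ on $\mathscr{B}$ and $\widehat{\psi}\neq0$ on $\mathscr{C}$.
\item Set $\widehat{\phi_0}=\chi/\widehat{\psi_0}$ and $\widehat{\phi}=\varphi/\widehat{\psi}$.
\end{itemize}
Then $\widehat{\psi_j}\,\widehat{\phi_j}$ is exactly the $j$-th Littlewood--Paley symbol, so $f=\sum_j\psi_j*(\phi_j*f)$, and $\phi_j*f$ has spectrum in $2^j\mathscr{C}$. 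Peetre's inequality now gives $\sup_{Q_{jm}}|\phi_j*f|\lesssim\sum_{|k-j|\le1}M_r(\Delta_kf)$ on $Q_{jm}$, and Fefferman--Stein with $r<1$ closes the estimate.

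Alternatively, keep your kernels and prove the multi-scale bound $\sup_{|y-x|\le2^{1-j}}|\phi_j*f(y)|\lesssim\sum_k2^{-|k-j|N}M_r(\Delta_kf)(x)$ with $N>|s|$. This uses the rapid decay of $\widehat{\phi_j}$ above $2^j$ and its vanishing of order $L'>|s|$ at the origin. Then apply Young's inequality in $j$.

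\textbf{Three further corrections.}
\begin{enumerate}
\item Your coefficient simplifies to $\lambda_{jm}=C2^{js}\sup_{Q_{jm}}|\phi_j*f|$. The atom normalization $|D^\alpha a|\le 2^{-j(s-1/p)+j|\alpha|}$ instead requires $\lambda_{jm}=C2^{j(s-1/p)}\sup_{Q_{jm}}|\phi_j*f|$, so that $\lambda_{jm}\chi^{(p)}_{jm}\sim 2^{js}\sup_{Q_{jm}}|\phi_j*f|\,\mathbf{1}_{Q_{jm}}$. With your choice, $\|\lambda\|_{f_{pq}}$ measures an $F^{s+1/p}_{p,q}$-type quantity.
\item The intervals $Q_{jm}$ overlap with $\sum_m\mathbf{1}_{Q_{jm}}=2$ a.e., so your atoms sum to $2f$. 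Localize instead to the tiling by $\tfrac12Q_{jm}$ and take $\operatorname{supp}\psi\subset[-\tfrac12,\tfrac12]$. This also yields atoms supported in $Q_{jm}$ itself, i.e.\ the statement for every $c\ge1$.
\item In the sufficiency half the bookkeeping of $r$ is backwards. For $j\le\nu$, the moment gain together with $\|a_{\nu m}\|_{L^1}\lesssim 2^{-\nu}2^{-\nu(s-1/p)}$ gives $2^{-(\nu-j)(L+1+s)}$. Summing over $m$ at the coarser scale $2^{-j}$ costs $2^{(\nu-j)/r}$, so you need $L+s>1/r-1$. You cannot enlarge $L$, since it is fixed in the statement with only $L>-s$ assumed. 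Letting $r\to0$ makes the loss worse. The right choice is $r=1$ if $p,q>1$, and $r<1$ close to $1$ if $\min(p,q)=1$; this is possible precisely because $L+s>0$. That is where the hypothesis $L>-s$ is used, while $K>s$ gives the decay $2^{-(j-\nu)(K-s)}$ for $j>\nu$.
\end{enumerate}
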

	
	Let 
	\begin{equation}\label{eq:equation20}
		E=\{E_{\nu m}:\nu\in\mathbb{N},m\in\mathbb{Z}\}, \quad E_{\nu m}\subset Q_{\nu m},\quad |E_{\nu m}|\sim|Q_{\nu m}|, 
	\end{equation}
	where the equivalence constants are independent of $\nu$ and $m$. Let $\chi_{\nu m}^{(p),E}$ be given by \eqref{eq:equation19} with $E_{\nu m}$ in place of $Q_{\nu m}.$ 

	\begin{lemma}\label{lem:atom-1}
		\textup{(\cite{MR2250142})} Let $1\leq p<\infty $, $1\leq q\leq \infty$ and $\lambda\triangleq\{\lambda_{\nu m}\in\mathbb{C}:\nu\in\mathbb{N},m\in\mathbb{Z}\}$. Let $E$ be given by \eqref{eq:equation20}, then 
		\[ f_{pq}=f^{E}_{pq}\triangleq\{\lambda:\|\lambda\|_{f_{pq}^E}<\infty\}, \]
		where 
		\[ \|\lambda\|_{f_{pq}^E}=\Big\|\big(\sum_{\nu=0}^{\infty}\sum_{m\in \mathbb{Z}}|\lambda_{\nu m}\chi^{(p),E}_{\nu m}(\cdot)|^q\big)^{\frac{1}{q}}\Big\|_{L^p(\mathbb{R})}  \]
		is an equivalent norm depending on the equivalence constants in \eqref{eq:equation20}.
	\end{lemma}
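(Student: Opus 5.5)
The plan is to compare the two quasi-norms pointwise and then use the Fefferman--Stein vector-valued maximal inequality. One direction is trivial. Since $E_{\nu m}\subset Q_{\nu m}$, we have $\chi^{(p),E}_{\nu m}\le \chi^{(p)}_{\nu m}$ pointwise. Hence $\|\lambda\|_{f^E_{pq}}\le \|\lambda\|_{f_{pq}}$, and so $f_{pq}\subset f^E_{pq}$.

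For the reverse inequality, let $M$ denote the Hardy--Littlewood maximal operator and fix an auxiliary exponent $0<r<\min(1,p,q)$, say $r=\tfrac12$. Take $x\in Q_{\nu m}$ and average over the interval $Q_{\nu m}$ itself, which contains $x$. This gives
\[
M\chi_{E_{\nu m}}(x)\ \ge\ \frac{|E_{\nu m}\cap Q_{\nu m}|}{|Q_{\nu m}|}=\frac{|E_{\nu m}|}{|Q_{\nu m}|}\ \ge\ c_0 ,
\]
where $c_0>0$ is the lower equivalence constant in \eqref{eq:equation20}. Consequently $\chi_{Q_{\nu m}}\le c_0^{-1/r}\big(M\chi_{E_{\nu m}}\big)^{1/r}$ pointwise. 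Multiplying by $2^{(\nu-1)/p}$ and using the homogeneity of $M$ yields
\[
|\lambda_{\nu m}|\chi^{(p)}_{\nu m}\ \le\ c_0^{-1/r}\Big(M\big(|\lambda_{\nu m}|^r(\chi^{(p),E}_{\nu m})^r\big)\Big)^{1/r}.
\]

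Next, take the $l^q$ norm over $(\nu,m)$ and then the $L^p$ norm in $x$. Writing $g_{\nu m}=|\lambda_{\nu m}\chi^{(p),E}_{\nu m}|^r$, we obtain
\[
\|\lambda\|_{f_{pq}}\le c_0^{-1/r}\,\Big\|\big\|\{M g_{\nu m}\}\big\|_{l^{q/r}}\Big\|_{L^{p/r}}^{1/r}.
\]
Since $p/r>1$ and $q/r\in(1,\infty]$, the Fefferman--Stein inequality applies in $L^{p/r}(l^{q/r})$. When $q=\infty$ this is immediate from $\sup_{\nu,m} Mg_{\nu m}\le M(\sup_{\nu,m} g_{\nu m})$ together with the scalar maximal theorem. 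The inequality bounds the right-hand side by $C\,\|\{g_{\nu m}\}\|_{L^{p/r}(l^{q/r})}^{1/r}=C\|\lambda\|_{f^E_{pq}}$. The constant depends only on $p,q,r$ and $c_0$, which gives both the equivalence of the norms and the equality $f_{pq}=f^E_{pq}$.

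The only delicate point is the choice of $r$. It must lie strictly below $\min(p,q)$, since the maximal inequality fails on $L^1$ and on $l^1$. This is why the exponent is lowered to $r<1$ instead of applying $M$ directly. With that choice, no further obstacle arises.
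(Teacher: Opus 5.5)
Your proof is correct. The trivial bound $\|\lambda\|_{f^E_{pq}}\le\|\lambda\|_{f_{pq}}$ follows from $E_{\nu m}\subset Q_{\nu m}$, and the reverse bound follows from the pointwise estimate $\chi_{Q_{\nu m}}\le c_0^{-1/r}\big(M\chi_{E_{\nu m}}\big)^{1/r}$ combined with the Fefferman--Stein inequality in $L^{p/r}(l^{q/r})$ for $0<r<\min(p,q)$; this is the standard argument (going back to Frazier and Jawerth) for this result in Triebel's monograph, which the paper simply cites without giving a proof of its own.
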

	
	Next, we recall several results about the solutions to the CH equation. 
	
	\begin{lemma}\label{lem:CH-jie}
		\textup{(\cite{MR1741872,MR1851854,zhang2026commutatorestimatesapplicationstransporttype})}. Given $u_0\in H^s(\mathbb{R}),s>\frac{3}{2}$, there exists a maximal $T\geq \tilde{T}(\|u_0\|_{H^s})>0$ and a unique solution $u$ to \eqref{CH-transport-equation} such that 
		\[ u\in C([0,T);H^s)\cap C^1([0,T);H^{s-1}). \]
		Moreover, the solution depends continuously on the initial data, i.e. the mapping $u_0\mapsto u:H^s\to C([0,T);H^s)\cap C^1([0,T);H^{s-1})$ is continuous on a $H^s$-neighborhood of $u_0$, and if $T<\infty$, then $\lim_{t\to T^-}\|u(t)\|_{H^s}=\infty.$
	\end{lemma}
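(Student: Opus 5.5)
This lemma is the classical local theory, so the plan is to take it from the cited references rather than reprove it. The key observation is that for $s>\frac{3}{2}$ we have $H^s(\mathbb{R})=F^s_{2,2}(\mathbb{R})$. We also have $s>\max\big(1+\frac{1}{2},\frac{3}{2}\big)$, which is exactly the subcritical range of \cite{zhang2026commutatorestimatesapplicationstransporttype}, and of Li--Olver \cite{MR1741872} and Danchin \cite{MR1851854} in the Sobolev setting. Existence, uniqueness and continuous dependence then follow from the scheme already used in the proof of Theorem \ref{thm:abstract-Local-well-posedness}, with $F^2_{1,q}$ replaced by $H^s$. First I build the iterates \eqref{approximate-solution-system}. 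Next I bound them uniformly via Lemma \ref{lem:a-priori-estimates} with $Z(t)=\|\partial_x u^n\|_{H^{s-1}}$, using that $P(D)$ is of order $-1$ and $H^{s-1}$ is an algebra for $s-1>\frac12$. Then I pass to the limit by compactness, prove uniqueness from an $H^{s-1}$ difference estimate, and get continuous dependence by the Bona--Smith type splitting as in Lemma \ref{lem:continuity-transport}.

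The uniform bound gives a lifespan $\tilde T$ that depends only on $\|u_0\|_{H^s}$. Take $T$ to be the supremum of existence times; then $T\geq\tilde T$. The membership $\partial_t u\in C([0,T);H^{s-1})$ comes from the equation itself. The right-hand side $P(D)(u^2+\frac12 u_x^2)$ lies in $H^{s}$. The term $u\partial_xu$ lies in $H^{s-1}$ because $H^{s-1}$ is an algebra.

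For the blow-up alternative, I argue by contradiction. Suppose $T<\infty$ but $\|u(t)\|_{H^s}$ stays bounded along a sequence $t_k\to T$. Restarting the solution at $t_k$ gives a lifespan at least $\tilde T(\sup_k\|u(t_k)\|_{H^s})$, which extends the solution past $T$ and contradicts maximality. This only gives $\limsup_{t\to T^-}\|u(t)\|_{H^s}=\infty$. To get the full limit, I note that any $t$ close to $T$ with $\|u(t)\|_{H^s}\le M$ would produce the same extension, so the norm must tend to infinity.

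The only delicate point is uniqueness and continuity in the full $H^s$ topology near $s=\frac32^+$. I would rely on the cited works for these and not reprove them. If a self-contained argument were needed, I would treat uniqueness via the Lagrangian argument of the uniqueness part of Theorem \ref{thm:abstract-Local-well-posedness}, which avoids Moser estimates.
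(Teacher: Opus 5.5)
Your proposal is correct and essentially matches the paper, which gives no proof and simply cites Li--Olver, Danchin and Zhang--Yan for this classical $H^s=F^s_{2,2}$, $s>\frac32$ theory; your sketch (iterative transport scheme, $H^{s-1}$ difference estimate, Danchin-type continuity lemma, restart argument for the blow-up alternative) is the standard route taken in those references. One small remark: your sequence-based contradiction already excludes boundedness of $\|u(t_k)\|_{H^s}$ along \emph{any} sequence $t_k\to T$, so it gives $\lim_{t\to T^-}\|u(t)\|_{H^s}=\infty$ directly, and the intermediate claim that it ``only gives'' the $\limsup$ is wrong, though your final conclusion is right.
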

	
	\begin{lemma}\label{lem:CH-time}
		\textup{(\cite{MR1631589})}Assume $u_0\in H^3(\mathbb{R})$ is a real-valued odd function and $u'_0(0)<0$. Then the corresponding strong solution to \eqref{CH-transport-equation} blows up in finite time. Moreover, the maximal time of existence is bounded by $2/|u'_0(0)|$.
	\end{lemma}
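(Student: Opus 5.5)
The plan is to reduce the statement to a Riccati-type differential inequality for the slope at the origin, $m(t)\triangleq \partial_xu(t,0)$. By Lemma \ref{lem:CH-jie} with $s=3$, there is a unique maximal solution $u\in C([0,T);H^3)\cap C^1([0,T);H^2)$, and if $T<\infty$ then $\|u(t)\|_{H^3}\to\infty$. Since $H^2\hookrightarrow C^1$, the function $m$ is $C^1$ on $[0,T)$, and point evaluations of $u$, $u_x$, $u_{xx}$ at $x=0$ are legitimate.

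\textbf{Step 1: oddness is preserved.} If $u$ solves \eqref{CH-transport-equation}, then so does $\tilde u(t,x)\triangleq-u(t,-x)$. Indeed, the nonlinearity $u\partial_xu$ and the operator $P(D)$ are odd under $x\mapsto -x$, while $u^2+\frac12 u_x^2$ is even. Because $\tilde u(0)=u_0$, uniqueness in Lemma \ref{lem:CH-jie} gives $\tilde u=u$. Hence $u(t,\cdot)$ is odd, so $u(t,0)=0$, and $u_x(t,\cdot)$ is even.

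\textbf{Step 2: the equation for $u_x$.} Use $\partial_xP(D)=-\partial_x^2(1-\partial_x^2)^{-1}=\mathrm{Id}-(1-\partial_x^2)^{-1}$ and $(1-\partial_x^2)^{-1}f=p*f$ with $p(x)=\frac12e^{-|x|}$. Differentiating \eqref{CH-transport-equation} in $x$ then gives
\[ u_{tx}+uu_{xx}=-\tfrac12u_x^2+u^2-p*\big(u^2+\tfrac12u_x^2\big). \]
Evaluating at $x=0$ and using $u(t,0)=0$ yields
\[ m'(t)=-\tfrac12m(t)^2-\big(p*(u^2+\tfrac12u_x^2)\big)(t,0)\leq-\tfrac12m(t)^2, \]
since $p>0$ and the convolved quantity is nonnegative.

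\textbf{Step 3: blow-up and the time bound.} Since $m(0)=u_0'(0)<0$ and $m$ is nonincreasing, $m(t)<0$ on the whole existence interval. Dividing the inequality by $m^2$ gives $\frac{d}{dt}\big(1/m\big)\geq \frac12$, hence
\[ \frac1{m(t)}\geq\frac1{m(0)}+\frac t2. \]
The right-hand side reaches $0$ at $t_*=2/|u_0'(0)|$. As long as $m$ stays finite and negative, this forces
\[ m(t)\leq \frac{2m(0)}{2+m(0)t}\to-\infty \quad\text{as } t\to t_*^-. \]
Now suppose $T>t_*$. Then $u\in C([0,t_*];H^3)$, so $\sup_{t\le t_*}\|u_x(t)\|_{L^\infty}\leq C\sup_{t\le t_*}\|u(t)\|_{H^3}<\infty$, which contradicts $m(t)\to-\infty$. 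Therefore $T\leq 2/|u_0'(0)|$. The solution blows up in finite time, and by Lemma \ref{lem:CH-jie} $\|u(t)\|_{H^3}\to\infty$ as $t\to T^-$.

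\textbf{Main obstacle.} The delicate point is justifying the pointwise computation at $x=0$. This requires enough regularity for $t\mapsto u_x(t,0)$ to be $C^1$ and for the differentiated equation to hold pointwise. This is why the data are taken in $H^3$: then $u_{tx}\in C([0,T);H^1)\subset C([0,T);C^0)$ and $u_{xx}\in C^0$, so every term is continuous. If needed, one can alternatively approximate by smooth odd data and pass to the limit using the continuous dependence in Lemma \ref{lem:CH-jie}.
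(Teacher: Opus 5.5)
Your argument is correct. The paper gives no proof of its own and only cites \cite{MR1631589}, and your argument is essentially the standard one from that reference: oddness is propagated by uniqueness, so $u(t,0)=0$, and evaluating the differentiated equation at the origin gives the Riccati inequality $m'\leq -\frac12 m^2$, hence $T\leq 2/|u_0'(0)|$. The one point left implicit is that the solution stays real-valued (since $P(D)$ has a real kernel, $\bar u$ solves the same problem, so $\bar u=u$ by uniqueness); this is what guarantees $p*(u^2+\frac12u_x^2)\geq 0$.
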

	
	\begin{lemma}\label{lem:Lguji}
		\textup{(\cite{MR3906228})} Assume $u\in H^2(\mathbb{R})$, we have
		\[ \|u_x\|_{L^\infty}\leq C\|u\|_{F^1_{\infty,\infty}}\log_2(2+\|u\|^2_{H^2})+C. \]
	\end{lemma}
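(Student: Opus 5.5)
The plan is the standard Littlewood--Paley splitting into low and high frequencies, with the cut-off chosen logarithmically in $\|u\|_{H^2}$. Since $u\in H^2(\mathbb{R})$, the series $u_x=\sum_{j\geq -1}\Delta_j u_x$ converges uniformly. Indeed, by Bernstein's inequality,
\[
\|\Delta_j u_x\|_{L^\infty}\leq C2^{j/2}\|\Delta_j u_x\|_{L^2}\leq C2^{-j/2}\|u\|_{H^2},
\]
which is summable in $j$. Hence $u_x$ is a bounded continuous function and we may estimate it block by block. Fix an integer $N\geq 0$ to be chosen later, and split
\[
\|u_x\|_{L^\infty}\leq \sum_{j=-1}^{N}\|\Delta_j u_x\|_{L^\infty}+\sum_{j>N}\|\Delta_j u_x\|_{L^\infty}.
\]

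\textbf{Low frequencies.} For $0\leq j\leq N$, Bernstein's inequality on the annulus gives $\|\partial_x\Delta_j u\|_{L^\infty}\leq C2^j\|\Delta_j u\|_{L^\infty}\leq C\|u\|_{F^1_{\infty,\infty}}$. Here we use that $F^1_{\infty,\infty}$ carries the norm $\sup_j 2^{j}\|\Delta_j u\|_{L^\infty}$, by the footnote in Section \ref{section:2}. For $j=-1$ we similarly have $\|\partial_x\Delta_{-1}u\|_{L^\infty}\leq C\|\Delta_{-1}u\|_{L^\infty}\leq C\|u\|_{F^1_{\infty,\infty}}$. Summing these gives the bound $C(N+2)\|u\|_{F^1_{\infty,\infty}}$.

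\textbf{High frequencies.} Using the $L^2$--$L^\infty$ Bernstein estimate above together with $\|\Delta_j u_x\|_{L^2}\leq C2^{-j}\|\Delta_j u\|_{H^2}\leq C2^{-j}\|u\|_{H^2}$, we get
\[
\sum_{j>N}\|\Delta_j u_x\|_{L^\infty}\leq C\sum_{j>N}2^{-j/2}\|u\|_{H^2}\leq C2^{-N/2}\|u\|_{H^2}.
\]

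\textbf{Choice of $N$.} Take $N=\lceil\log_2(2+\|u\|_{H^2}^2)\rceil$. Then $2^{-N/2}\|u\|_{H^2}\leq \|u\|_{H^2}/(2+\|u\|_{H^2}^2)^{1/2}\leq 1$, so the high-frequency part is bounded by a constant $C$. Moreover $N+2\leq 3+\log_2(2+\|u\|_{H^2}^2)\leq C\log_2(2+\|u\|_{H^2}^2)$, because $\log_2(2+\cdot)\geq 1$. Combining the two parts yields the claimed estimate.

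No step here is a real obstacle. The only points needing a little care are the uniform convergence of the Littlewood--Paley series, which justifies the pointwise splitting, and making sure the logarithm absorbs the additive constant $N+2$. The positivity $\log_2(2+\|u\|_{H^2}^2)\geq 1$ handles the latter.
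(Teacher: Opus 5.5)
Your proof is correct: the low/high Littlewood--Paley splitting with the cutoff $N=\lceil\log_2(2+\|u\|_{H^2}^2)\rceil$ is the standard argument behind this logarithmic estimate, and every step checks out, including the uniform convergence and the absorption of $N+2$ into $C\log_2(2+\|u\|_{H^2}^2)$. The paper gives no proof of its own and simply quotes the result from \cite{MR3906228}, where it is obtained by essentially this same frequency-splitting argument.
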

	
	\begin{proof}[Proof of Theorem \ref{thm:CH-illposedness}]
		Let $a(x)$ be an even function in $C^\infty_c(\mathbb{R})$ with support in $[-1,1]$ and values in $[-1,0)$. Set
		\[
		 M=\max_{|\alpha|\leq 2}\sup_{x\in\mathbb{R}}|D^\alpha a(x)|,\qquad\tilde{a}(x)=\frac{a(x)}{3M},\qquad\tilde{a}_k(x)=\tilde{a}(2^kx),\quad k\in\mathbb{N}.
		\]
		It is easy to see that for all $k\in\mathbb{N}$, the following estimates hold:
		\[
		\text{supp }\big(x\tilde{a}_k(x)\big)\subset Q_{k0},
		 \]
		\[\qquad |x\tilde{a}_k(x)|\leq 2^{-k},   \]
		\[ |D(x\tilde{a}_k)|=|\tilde{a}_k(x)+x2^k\tilde{a}'(2^kx)|\leq 1,\]
		\[ |D^2(x\tilde{a}_k)|=|2^k\tilde{a}'(2^kx)+2^k\tilde{a}'(2^kx)+x2^{2k}\tilde{a}''(2^kx)|\leq 2^k,
		\]
		and
		\[ \int_{\mathbb{R}}x\tilde{a}_k(x)dx=0. \]
		Hence, for $1<p<\infty,1\leq q\leq \infty$, the functions $b_{k0}(x)\triangleq x\tilde{a}_k(x)$ are $(1+\frac{1}{p},p)_{2,1}$-atoms in $F^{1+\frac{1}{p}}_{p,q}(\mathbb{R})$ for $k\in\mathbb{N}^+$ (and $b_{00}(x)$ is a $1_2$-atom), according to Definition \ref{def:atom} and admissible in Lemma \ref{lem:F-atoms} with supports in the intervals $Q_{k 0}$ centred at the origin. Let 
		\begin{equation}
			f(x)=\sum^\infty_{k=0}
			\lambda_{k0}b_{k0}(x)\triangleq \sum_{k=0}^\infty \frac{1}{k+1}b_{k0}(x).
		\end{equation}
		Then 
		\begin{equation}
			\|f\|_{F^{1+\frac{1}{p}}_{p,q}}\leq \|\lambda\|_{f_{pq}}\sim 
			\|\lambda\|_{f^{E}_{pq}}\sim \Big(\sum_{k=0}^\infty\Big|\frac{1}{k+1}\Big|^p\Big)^{\frac{1}{p}}
		\end{equation}
		with the last term independent of $q$, which follows from Lemma \ref{lem:atom-1} if one chooses the sets $E_{\nu0}$ in \eqref{eq:equation20} such that $E_{\nu0}\cap E_{\mu0}=\varnothing$ if $\nu\neq\mu$. Thus, we see that $f\in F^{1+\frac{1}{p}}_{p,q}$, and 
		\[ f'(0)=\sum_{k=0}^\infty\frac{1}{k+1}\tilde{a}_k(0)=-\infty. \]
		For $\varepsilon>0$, by \eqref{def:S_j}, we take $u_{0,\varepsilon}\triangleq\|f\|^{-1}_{F^{1+\frac{1}{p}}_{p,q}}\cdot\varepsilon S_n(f)$ with $n$ sufficiently large such that $u'_{0,\varepsilon}(0)<-2\varepsilon^{-10}$. Then $u_{0,\varepsilon}$ is a real valued odd function, $u_{0,\varepsilon}\in H^\infty$, and  $\|u_{0,\varepsilon}\|_{F^{1+\frac{1}{p}}_{p,q}}\leq \varepsilon$. By Lemmas 
		\ref{lem:CH-jie} and \ref{lem:CH-time}, there exists a unique solution $u\in C([0,T);H^\infty(\mathbb{R}))$ with a maximal lifespan $T_{\varepsilon}< \varepsilon^{10}$. In order to prove Theorem \ref{thm:CH-illposedness}, it suffices to show 
		\begin{equation}\label{eq:equation21}
			\limsup_{t\to T^-_{\varepsilon}}\|u_\varepsilon\|_{F^1_{\infty,\infty}}=\infty.
		\end{equation}
		Indeed, suppose \eqref{eq:equation21} fails, then there exists a constant $M_\varepsilon>1$ such that
		\begin{equation}\label{eq:equation21-5}
			\sup_{t\in[0,T_\varepsilon)}\|u_\varepsilon(t)\|_{F^1_{\infty,\infty}}\leq M_\varepsilon.
		\end{equation}
		For $s>\frac{3}{2}$, note that the following standard energy estimate
		\begin{equation}\label{eq:equation22}
			\frac{d}{dt}\|u\|^2_{H^s}\leq C\|u_x\|_{L^\infty}\|u\|^2_{H^s}
		\end{equation}
		holds for any solution $u$ to \eqref{CH-transport-equation}. Combining \eqref{eq:equation21-5}-\eqref{eq:equation22} with Lemma \ref{lem:Lguji} yields
		\begin{align*}
			\frac{d}{dt}\|u_\varepsilon\|^2_{H^2}
			&\leq C\|\partial_xu_{\varepsilon}\|_{L^\infty}\|u_\varepsilon\|^2_{H^2}\\
			&\leq C\big(\|u_\varepsilon\|_{F^1_{\infty,\infty}}\log_2(2+\|u_\varepsilon\|^2_{H^2})+1\big)\|u_\varepsilon\|^2_{H^2}\\
			&\leq CM_\varepsilon\|u_\varepsilon\|^2_{H^2}\log_2(2+\|u_\varepsilon\|^2_{H^2})
		\end{align*}
		A straightforward computation gives  $\sup\limits_{t\in[0,T_\varepsilon)}\|u_\varepsilon\|^2_{H^2}<\infty$, which contradicts to the blow-up criterion in Lemma \ref{lem:CH-jie}. Therefore, we complete the proof of Theorem \ref{thm:CH-illposedness}.
	\end{proof}

	\noindent{\bf Acknowledgments.} This work was partially supported by the National Natural Science Foundation of China under grant 11971188.
	
	
	
	
	

	\bibliographystyle{abbrv}
	\bibliography{ref}
	
\end{document}